\documentclass[11pt]{article}
\usepackage{amsmath,amsthm}

%%%%%%%%%%%%%%%%%%%%%%%%%%%%%%%%%%%%%%%%%%%%%%%%%%%%%%%%%%%%%%%

%%%%%%%%    format de page
\setlength{\textwidth } {6in}
\setlength{\oddsidemargin} {0cm}
\setlength{\evensidemargin} {0cm}

\setlength{\topmargin} {0in}
\setlength{\headheight}{0cm}
\setlength{\headsep}{0cm}
\setlength{\textheight} {8.5in}

\def\ftoday{le \space\number\day \space\ifcase\month\or
  janvier\or f\'evrier\or mars\or avril\or mai\or juin\or
  juillet\or ao\^ut\or septembre\or octobre\or novembre\or d\'ecembre\fi
  \space\number\year}

%%%%%%%%%%%%%%%%%%%%%%%%%%%%%%%%%%%%%%%%%%%%%%%%%%%%%%%%%%%%%%%

%%%%%%%%%%%%%%%%%%%%%%%%%%%%%%%%%%%%%%%%%%%%%%%%%%%%%%%%%%%%%%%

%    MACROS

%%%%%%%%    parametres math
\def\real{I\kern-0.20em R}

\def\integer{I\kern-0.20em N}
\def\relative{{\rm \rlap Z\kern 2.2pt Z}}
\def\cc{\kern-.25em{\c c}}

\def\bc{\begin{center}}
\def\ec{\end{center}}
\def\=def{\stackrel{{\rm def}}{=}}

%%%%%%%%%%%%%%%%%%%%% Diffeo. formel, param=dim %%%%%%%%%%%%%%

%%%%%%%%%%%%%%%%%%%%% Diffeo. holom., param=dim %%%%%%%%%%%%%%

%%%%%%%%%%%%%%%%%%%%% Champ vect formel, param=dim %%%%%%%%%%%%%%

%%%%%%%%%%%%%%%%%%%%% Champ vect holomorph%%%%%%%%%%%%%%

%%%%%%%%%%%%%%%%%%%%% Champ vect holomorph d'ordre, param1=dim , param2=ordre  %%%%%%%%%%%%%%

%%%%%%%%%%%%%%%%%%%%% Champ vect formel d'ordre, param1=dim , param2=ordre  %%%%%%%%%%%%%%

%%%%%%%%%%%%%%%%%%%%%% Generic Champ vect polyn. d'ordre et deg, param1=dim , param2=ordre, param3= degre}  %%%%%%%%%%%%%%
%\newcommand\gpvf[3]{\widehat{\cal X}_{#1}^{#2}/\widehat{\cal X}_{#1}^{#3+1}}
%%%%%%%%%%%%%%%%%%%%%% Champ vect polyn. d'ordre et deg, param1=dim , param2=ordre, param3= degre-1}  %%%%%%%%%%%%%%
%\newcommand\pvf[3]{\widehat{\cal X}_{#1}^{#2}/\widehat{\cal X}_{#1}^{#3}}

%%%%%%%%%%%%%%%%%%%%%% Generic Champ vect polyn. d'ordre et deg, param1=dim , param2=ordre, param3= degre}  %%%%%%%%%%%%%%

%%%%%%%%%%%%%%%%%%%%%% Champ vect polyn. d'ordre et deg, param1=dim , param2=ordre, param3= degre-1}  %%%%%%%%%%%%%%

%%%%%%%%%%%%%%%%%%%%%% Champ vect homogen. deg, param1=dim , param2=ordre, param3= degre-1}  %%%%%%%%%%%%%%

%%%%%%%%%%%%%%%%%%%%% Algebre de Lie%%%%%%%%%%%%%%

%%%%%%%%%%%%%%%%%numerotation automatique de constantes : \const %%%%%%%%%%%%%%%%%%
\newcounter{indconst}
\newcounter{auxconst}

%%%%%%%%%%%%%%%%%%%%%%%%%%%%%%%%%%%%%%%%%%%%%%%%%%%%%%%%%%%%%%%%%%%%%%%%%%

%%%%%%%    delimiteurs paragraphes

\def\bit{\begin{itemize}}
\def\eit{\end{itemize}}
\def\ben{\begin{enumerate}}
\def\een{\end{enumerate}}
\def\bde{\begin{description}}
\def\ede{\end{description}}
 
%%%%%%    delimiteurs expressions
%\def\bar{\begin{array}}
%\def\ear{\end{array}}
\def\beq{\begin{equation}}
\def\eeq{\end{equation}}
\def\bfi{\begin{figure}[hbt] \begin{center}}
\def\efi{\end{center} \end{figure}}

\def\bce{\begin{center}}
\def\ece{\end{center}}

        %LMS
        %LMS
%\newcommand{\cqfd  }{\hfill$\Box$}                                       %LMS 

%%%%%%%%%%%%%%%%%%%%%%%%%%%%%%%%%%%%%%%%%%%%%%%%%%%%%%%%%%%%%%%%%%%
 
%%%%%%    definition des theoremes

\newtheorem {theos} {Theorem}[section]

\newtheorem {coros} {Corollary} [section]

\newtheorem {lemms} {Lemma}[section]
\newtheorem {prop} {Proposition}[section]

\newtheorem {defis} {Definition}[section]

\newtheorem {rems}{Remark} [section]
\newtheorem {ex} {Exemple}[section]
\newtheorem {exs} {Example}[section]

\input amssym.def
\input amssym
\begin{document}

%\makeindex
\title{Family of intersecting totally real manifolds of $({\Bbb C}^n,0)$ and germs of holomorphic diffeomorphisms}
\author{Laurent Stolovitch \thanks{CNRS-Laboratoire J.-A. Dieudonn\'e
U.M.R. 7351, Universit\'e de Nice - Sophia Antipolis, Parc Valrose
06108 Nice Cedex 02, France. E-mail : {\tt stolo@unice.fr}. Research of L. Stolovitch was supported by ANR grant ``ANR-10-BLAN 0102'' for the project DynPDE}}
\date{\ftoday}
\maketitle
\def\abstractname{Abstract}
\begin{abstract}
We prove the existence (and give a characterization) of a germ of complex analytic set left invariant by an abelian group of germs of holomorphic diffeomorphisms at a common fixed point.%(not tangent to the identity at the origin). 
 We also give condition that ensure that such a group can be linearized holomorphically near the fixed point. It rests on a ``small divisors condition'' of the family of linear parts.
 
The second part of this article is devoted to the study families of totally real intersecting $n$-submanifolds of $(\Bbb C^n,0)$. We give some conditions which allow to straighten
holomorphically the family. If this is not possible to do it formally, we construct a germ of complex analytic set at the
origin which interesection with the family can be holomorphically straightened. The second part is an application of the first. 
\end{abstract}
%\tableofcontents
\section{Introduction}

One of the aim of the article is to study the geometry of some germs of real
analytic submanifolds of $(\Bbb C^n,0)$. We shall consider, in this article, only families of totally real submanifolds of $(\Bbb C^n,0)$ intersecting at the origin. 
%On the other hand, we shall study submanifolds having a CR-singularity at the origin. 
We are primarily interested in the holomorphic classification of such objects, that is the orbits of the action of the group of germs of holomorphic diffeomorphisms
fixing the origin. 

In this article, we shall mainly focus on the existence of complex analytic
subsets intersecting such germs of real analytic manifolds. We shall also be interested in the problem of straightening holomorphically the family. We mean that we shall give sufficient condition which will ensure that, in
a good holomorphic coordinates system, each (germ of) submanifold of the family is an $n$-plane. In the case there are formal obstructions to straighten the family, we show the existence of a germ complex analytic variety which intersects the family along a set that can be straightened.
This part of this work takes its roots in and generalizes a work
of Sidney Webster \cite{webster-inter} from which it is very inspired. This part of the work start after having listen to Sidney Webster at the Partial Differential Equations and Several Complex Variables conference held in Wuhan University in June 2004.

The starting point of the first problem appeared already in the work of E. Kasner \cite{kasner1} and was studied,
from the formal view point, by G.A. Pfeiffer \cite{Pfeiffer1}. They were
interested in pairs of real analytic curves in $(\Bbb C,0)$ passing through
the origin. We shall not consider the case were some of the submanifolds
are tangent to some others. We refer the reader to the works of I. Nakai \cite{nakai-toulouse}, J.-M. Tr\'epreau \cite{trepreau-tg} and P. Ahern and X. Gong \cite{gong-ahern-tg} in this direction.

The core of this problem rests on geometric properties of an associated dynamical systems. To be more specific, we shall deal, in the first part of this article, with germs of holomorphic diffeomorphisms of $(\Bbb C^n,0)$ in a neighborhood of the origin (a common fixed point). We shall consider those whose linear part at the origin is different from the identity. The main result is the existence of germ of analytic subset of $(\Bbb C^n,0)$ invariant by an abelian group of such diffeomorphisms under some diophantine condition. This diophantine condition is a Brjuno like condition over small divisors of the full family of linear parts (such condition was already devised by the author for commuting vector fields in \cite{stolo-ihes}). The main result is obtained when trying not to linearize (this not possible in general) but rather to linearize along a well-chosen ideal in a neighborhood of the origin. In fact, this is almost always possible when considering namely the {\it resonant ideal}, generated by the polynomial first integrals of the linear part. The zero locus of this ideal provides, in good holomorphic coordinates, the invariant analytic set. If the family is formally linearizable and the family of linear parts satisfies the small divisor condition then, we shall also prove the the family is holomorphically linearizable (in that case the ideal is chosen to be zero). This first kind (resp. second) of result was obtained by the author for a single (resp. family of commuting) germ of holomorphic vector field at singular point \cite{Stolo-dulac} (resp. \cite{stolo-ihes}). This article corresponds to the two first parts of the preprint \cite{stolo-webster}. This work is also used in a recent work in collaboration with
Xianghong Gong \cite{stolo-gong1}.\\
\section{Abelian group of diffeomorphisms of $(\Bbb C^n,0)$ and their invariant sets}

The aim of this section is to prove the existence of complex analytic invariant subset for a commuting family of germs of holomorphic diffeomorphisms in a neighborhood of a common fixed point. This is very inspired by a previous article of the author concerning holomorphic vector fields. Although the objects are not the same, some of the computations are identical and we shall refer to them when possible.

Let $D_1:=\text{diag}(\mu_{1,1},\ldots,\mu_{1,n}),\ldots, D_l:=\text{diag}(\mu_{l,1},\ldots,\mu_{l,n})$ be diagonal invertible matrices.
Let us consider a family $F:=\{F_i\}_{i=1,\ldots l}$ of {\bf commuting germs of holomorphic diffeomorphisms} of $(\Bbb C^n,0)$ whose linear part, at the origin, is $D:=\{D_ix\}_{i=1,\ldots l}$ : 
$$
F_i(x)=D_ix+f_i(x), \quad \text{with}\quad f_i(0)=0,\;Df_i(0)=0,\;f_i\in {\cal O}_n.
$$

Let ${\cal I}$ be an ideal of ${\cal O}_n$ {\bf generated by monomials} of
$\Bbb C^n$. Let $V({\cal I})$ be the germ at the origin, of the analytic
subset of $(\Bbb C^n,0)$ defined by ${\cal I}$. It is left invariant by
the family $D$. Let us set $\hat {\cal I}:=\widehat{\cal O}_n\otimes {\cal
  I}$. Here we denote ${\cal O}_n$ (resp. $\widehat{\cal O}_n$) the ring of
germ of holomorphic function at the origin (resp. ring of formal power series)
of $\Bbb C^n$. For $Q=(q_1,\ldots, q_n)\in \Bbb N^n$ and
$x=(x_1,\ldots,x_n)\in \Bbb C^n$, we shall write
$$
|Q|:=q_1+\cdots +q_n,\quad x^Q:=x_1^{q_1}\cdots x_n^{q_n}.
$$ 
We shall shall denote $\Bbb N^n_2$, the set of $Q\in\Bbb N^n$ such that $|Q|\geq 2$.

Let $\{\omega_k(D,{\cal I})\}_{k\geq 1}$ be the sequence of positive numbers defined by
$$
\omega_k(D,{\cal I})=\inf\left\{\max_{1\leq i\leq l}|\mu_{i}^Q-\mu_{i,j}|\neq 0\;|\;2\leq |Q|\leq 2^k, 1\leq j\leq n,Q\in \Bbb N^n, x^Q\not\in {\cal I}\right\}
$$
where $\mu_i^Q:=\mu_{i,1}^{q_1}\cdots\mu_{i,n}^{q_n}$.
Let $\{\omega_k(D)\}_{k\geq 1}$ be the sequence of positive numbers defined by
$$
\omega_k(D)=\inf\left\{\max_{1\leq i\leq l}|\mu_{i}^Q-\mu_{i,j}|\neq 0\;|\;2\leq |Q|\leq 2^k, 1\leq j\leq n,Q\in \Bbb N^n \right\}.
$$
\begin{defis}
\begin{enumerate}
\item We say that the ideal ${\cal I}$ is properly embedded if it has a set of monomial generators that does not involve all variables. In that case, the set ${\cal S}$ of variables not involved in any generator is not empty.
\item We say that the family $D$ is {\bf diophantine} (resp. on ${\cal I}$) if
$$
(\omega(D)):\quad-\sum_{k\geq 1}\frac{\ln \omega_k(D)}{2^k}<+\infty \quad (\text{resp.  } (\omega(D,{\cal I})):\quad-\sum_{k\geq 1}\frac{\ln \omega_k(D,{\cal I})}{2^k}<+\infty).
$$
%\item We say that the family $F$ is {\bf formally linearizable on$\hat{\cal I}$} if there exists a formal diffeomorphism $\hat\Phi$ of   $(\Bbb C^n,0)$, tangent to the identity at the origin with a zero projection on ${\cal C}_D$ such that   $\hat\Phi_*F_i-D_ix \in (\hat{\cal I})^n$ for all $1\leq i\leq l$. Here, $\hat\Phi_*F_i$ stands for $\hat \Phi\circ F_i\circ \hat\Phi^{-1}$.
\item A linear anti-holomorphic involution of $\Bbb C^n$ is a map
$\rho(z)=P\bar z$ where the matrix $P$ satisfies $P\bar P=Id$; $\bar z$ denotes the complex conjugate of $z$.
\item We denote by $\widehat{CI}$ the vector subspace of formal power series with no monomial in ${\cal I}$.
\item We shall say that ${\cal I}$ is {\bf compatible} with a anti-holomorphic linear involution $\rho$ if the map $\rho^*:\widehat{\cal O}_{n}\rightarrow \overline{\widehat{\cal O}_{n}}$ defined by $\rho^*(f)=f\circ \rho$  maps  $\widehat{\cal I}$ to $\overline{\widehat{\cal I}}$ and  $\widehat{CI}$ to $\overline{\widehat{CI}}$.
\end{enumerate}
\end{defis}
\begin{rems}
If $n>3$ and ${\cal I}=(x_1x_2)$ then ${\cal I}$ is properly embedded with ${\cal S}:=\{x_3,\ldots, x_n\}$. The ideal ${\cal I}=(0)$ can also be regarded as properly embedded with ${\cal S}:=\{x_1,\ldots, x_n\}$.
\end{rems}
Let $\widehat{\cal O}_n^D$ be the ring of formal invariant of the family $D$, that is 
$$
\widehat{\cal O}_n^D:=\{f\in \widehat{\cal O}_n\,|\;f(D_ix)=f(x)\; i=1,\ldots, l  \}.
$$
Let ${\cal C}_D$ be the non-linear formal centralizer of $D$, that is 
$$
{\cal C}_D:=\{F\in (\widehat{\cal M}^2_n)^n\,|\;F(D_ix)=D_iF(x)\; i=1,\ldots, l  \}.
$$
Here $\widehat{\cal M}_n$ denotes the maximal ideal of the ring $\widehat{\cal O}_n$ of formal power series.
It can be shown (as in proposition 5.3.2 of \cite{stolo-ihes}) that this ring
is generated by a finite number of monomials $x^{R_1},\ldots, x^{R_p}$ and
that the non-linear centralizer ${\cal C}_D$ of $D$ is a module over
$\widehat{\cal O}_n^D$ of finite type. Let $\text{ResIdeal}$ be the ideal
generated by the monomials $x^{R_1},\ldots, x^{R_p}$ in ${\cal O}_n$.
\begin{defis}
We say that the family $F$ is {\bf formally linearizable on
    $\hat{\cal I}$} if there exists a formal diffeomorphism $\hat\Phi$ of
  $(\Bbb C^n,0)$, tangent to the identity at the origin with a zero projection on ${\cal C}_D$ such that
  $\hat\Phi_*F_i-D_ix \in (\hat{\cal I})^n$ for all $1\leq i\leq l$. Here, $\hat\Phi_*F_i$ stands for $\hat \Phi\circ F_i\circ \hat\Phi^{-1}$.
\end{defis}
\begin{theos}\label{theo-invariant}
Let ${\cal I}$ be a monomial ideal (resp. properly embedded). Assume that the family $D$ is diophantine (resp. on
${\cal I}$). If the family $F$ is formally linearizable on $\hat{\cal
  I}$, then it is holomorphically linearizable on ${\cal I}$. Moreover,
there exists a unique such diffeomorphism $\Phi$ such that the projection of the Taylor expansion of $\Phi-Id$ onto ${\cal I}^n\cup {\cal C}_D$ vanishes and which linearizes $F$ on ${\cal I}$.

Moreover, let $\rho$ be a linear anti-holomorphic involution such that $\rho {\cal C}_D\rho={\cal C}_D$. We assume that  ${\cal I}$ is compatible with $\rho$. Assume that, for all $1\leq i\leq l$, $\rho\circ F_i\circ \rho$ belongs to the group generated by the $F_i$'s. Then $\Phi$ and $\rho$ commute with each other.
\end{theos}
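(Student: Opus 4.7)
My plan is to build $\Phi$ by a Newton--type iteration parallel to the one carried out for commuting vector fields in \cite{stolo-ihes}; only the elementary formulas change, because the linearised equation at each step involves conjugation by $D_i$ in place of $\mathrm{ad}\,D_i$. After rewriting $F_i$ in the normal form $F_i=D_ix+R_i$ with $R_i\in \mathcal{C}_D+\mathcal{I}^n$ given by the formal linearisability hypothesis, I look for $\Phi=\mathrm{Id}+\phi$ with $\phi$ supported outside $\mathcal{I}^n\cup\mathcal{C}_D$. At step $k$, I solve a homological equation of the form $D_i\phi_k(x)-\phi_k(D_ix)\equiv (R_i^{(k)})_{[2^{k-1}+1,2^k]}\bmod \mathcal{I}^n$ on the complement of $\mathcal{I}^n\cup\mathcal{C}_D$; on a monomial $a_{Q,j}\,x^Q e_j$ this reads $a_{Q,j}(\mu_{i,j}-\mu_i^Q)=\cdots$, and since the family commutes we may take the maximum over $i$, which bounds the divisor below by $\omega_k(D,\mathcal{I})$ exactly when $x^Q\not\in\mathcal{I}$ and $(Q,j)$ is non--resonant. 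The main obstacle is to show that the composition $\Phi=\cdots\circ(\mathrm{Id}+\phi_2)\circ(\mathrm{Id}+\phi_1)$ converges on some polydisk; this is a Brjuno--type majorant estimate that uses precisely the summability $-\sum_k 2^{-k}\log\omega_k(D,\mathcal{I})<+\infty$, and is carried out as in the reference.

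For uniqueness, if $\Phi$ and $\Phi'$ are two such linearisations, $\Psi:=\Phi\circ(\Phi')^{-1}$ conjugates $D_ix+g'_i$ to $D_ix+g_i$ with $g_i,g'_i\in \mathcal{I}^n$, hence $\Psi$ commutes with each $D_i$ modulo $\mathcal{I}^n$. Thus $\Psi-\mathrm{Id}$ lies in $\mathcal{C}_D+\mathcal{I}^n$ degree by degree, whereas by the normalisation it lies in the complement of $\mathcal{I}^n\cup\mathcal{C}_D$; induction on the degree gives $\Psi=\mathrm{Id}$.

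The compatibility with $\rho$ is then a direct application of this uniqueness. Set $\Phi^\rho:=\rho\circ\Phi\circ\rho$; the double antiholomorphic composition is holomorphic, and $\Phi^\rho$ is tangent to the identity because $\rho^2=\mathrm{Id}$. Since $\rho F_i\rho$ lies in the group generated by the $F_j$'s, and the property of linearising on $\mathcal{I}$ is preserved under composition (the ideal being generated by monomials, substitution of $D_jx+\mathcal{I}^n$ into a monomial of $\mathcal{I}$ stays in $\mathcal{I}$), the map $\Phi$ linearises $\rho F_i\rho$ on $\mathcal{I}$; conjugating by $\rho$ and invoking $\rho^*\widehat{\mathcal{I}}\subset\overline{\widehat{\mathcal{I}}}$ shows that $\Phi^\rho$ linearises $F_i$ on $\mathcal{I}$. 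The same compatibility transports the vanishing of the projection on $\mathcal{I}^n$ (using $\rho^*\widehat{CI}\subset\overline{\widehat{CI}}$), while $\rho\,\mathcal{C}_D\,\rho=\mathcal{C}_D$ does the same for $\mathcal{C}_D$; choosing, at each degree $d$, the complement of $(\mathcal{I}^n+\mathcal{C}_D)_d$ to be invariant under the antilinear involution induced by $\rho$ (possible on the finite--dimensional degree--$d$ piece), one sees that $\Phi^\rho$ also satisfies the normalisation. Uniqueness then forces $\Phi^\rho=\Phi$, that is $\rho\circ\Phi=\Phi\circ\rho$.
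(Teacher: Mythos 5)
Your overall strategy is parallel to the paper's: solve homological equations on the complement of $\mathcal{I}^n\cup\mathcal{C}_D$, choosing for each monomial the index $i_0=i_0(Q,j)$ that maximizes $|\mu_i^Q-\mu_{i,j}|$, and then derive convergence from the summability $-\sum_k 2^{-k}\log\omega_k(D,\mathcal{I})<+\infty$. The uniqueness argument and the deduction of $\rho\circ\Phi\circ\rho=\Phi$ from uniqueness are also essentially the paper's (the aside about ``choosing a complement invariant under $\rho$'' is unnecessary: the complement of $\mathcal{I}^n\cup\mathcal{C}_D$ in the monomial basis is canonical, and its $\rho$-invariance is exactly the compatibility hypothesis). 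However, there are three concrete gaps.

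\emph{Normalisation vs.\ composition.} Writing $\Phi=\cdots\circ(\mathrm{Id}+\phi_2)\circ(\mathrm{Id}+\phi_1)$ with each $\phi_k$ supported outside $\mathcal{I}^n\cup\mathcal{C}_D$ does \emph{not} yield a $\Phi$ with $\Phi-\mathrm{Id}$ supported there: from degree $4$ on, the cross terms (e.g.\ $D\phi_2\cdot\phi_1$) spill into $\mathcal{I}^n\cup\mathcal{C}_D$. You must either renormalise by a $\mathcal{C}_D$-correction at each step, or, as the paper does, construct $\Phi-\mathrm{Id}=\sum_Q\phi_{j,Q}y^Qe_j$ additively degree by degree, which imposes the normalisation for free and is what feeds into the uniqueness claim.

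\emph{Compatibility across the family.} For a commuting family the homological equations for different $i$'s are a priori incompatible once you choose to divide only by $\delta^{i_0}_{Q,j}$. The paper devotes a lemma to showing that the resulting $\Phi$ linearises \emph{every} $F_i$ on $\hat{\mathcal{I}}$, using the identity $(\mu_i^Q-\mu_{i,m})\{f_{j,m}\circ\Psi\}_Q=(\mu_j^Q-\mu_{j,m})\{f_{i,m}\circ\Psi\}_Q$ which comes from $F_iF_j=F_jF_i$ together with the existence of a formal linearisation. Saying ``since the family commutes we may take the maximum over $i$'' asserts the conclusion without the argument.

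\emph{The small-divisor estimate.} The convergence is not in fact ``carried out as in the reference''; the presence of the ideal $\mathcal{I}$ and of the whole family forces a new technical lemma (Lemma~\ref{lemm-tech}): if $\psi_j^{(k)}(Q)=1$ and $Q=P+P'$ with $|P|\leq 2^k-1$ and $x^Q\notin\mathcal{I}$, then $\psi_j^{(k)}(P')=0$. Its proof is where the ``properly embedded'' hypothesis and the normalisation $(\ref{theta})$ (via the set $\mathcal{S}$ of unused variables) come in: one appends a variable $x_a$, $a\in\mathcal{S}$, to $P$ without leaving the complement of $\mathcal{I}$, so that $\max_i|\mu_i^{P+E_a}-\mu_{i,a}|\geq\omega_k(D,\mathcal{I})$, and then derives a contradiction. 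Without this lemma the counting estimate $\phi^{(k)}(Q)\leq 2n|Q|/2^k$ and the bound $\eta_Q\leq c^{|Q|}$ do not follow, and the proof is not complete. This is the genuinely new content of the theorem, and it is absent from your proposal.
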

This theorem can be rephrased as follow~: Under the afore-mentioned
diophantine condition,  then there exists a germ of holomorphic diffeomorphism
$\Phi$ such that $\Phi_*F_i-D_ix \in ({\cal I})^n$ for all $1\leq i\leq l$. As
a consequence, in a good holomorphic coordinates system, the analytic subset
$V({\cal I})$ is left invariant by each $F_i$ and its restriction to it is the
linear mapping $x\mapsto D_{i|V({\cal I})}x$. Recall that the analytic set is actually the union of complex linear subspaces whose components are obviously invariant by the diagonal linear map.
\begin{rems}
Such a diophantine condition of a family was first devised in the case of germs of vector fields by the author \cite{stolo-ihes}. In order to conjugate a family of commuting diffeomorphisms of the circle, close to rotations, to rotations, J. Moser \cite{moser-circle} also used a Siegel type condition for the full family of rotations. 
\end{rems}
\begin{rems}
The family $D$ can be diophantine while none of the $D_i$'s is (see \cite{Stolo-asi07,stolo-ihes}).
\end{rems}
\begin{rems}
The fact that the ideal is properly embedded allow us to use only the diophantine condition $(\omega(D,{\cal I}))$ which is weaker than $(\omega(D))$
\end{rems}
The second part of the theorem will only be used for further applications.
\begin{rems}\label{invar-facile}
\begin{enumerate}
\item If the ring of invariant of $D$ reduces to the constants and if $D$ is
  diophantine, then $F$ is holomorphically linearizable in a neighborhood of
  the origin. For a single diffeomorphism, this was obtain by H. R\"ussmann
  \cite{russmann-ihes,russmann-diffeo} and by T. Gramtchev and M.Yoshino
  \cite{gram-yoshi} for an abelian group under a slightly coarser diophantine condition.
\item There are examples of germs of diffeomorphisms with non diagonalizable linear part, which cannot be analytically linearized even if the diffeomorphism is formally linearizable and the semi-simple part of its linear part is diophantine\cite{delatte-gramchev,yoccoz-ast}.
\item The existence of an invariant manifold for a germ of diffeomorphism was obtain by J. P\"oschel \cite{Posch}. Despite the fact that we are dealing with a family of diffeomorphisms, the main difference is that we are able to linearize {\bf simultaneously} on each irreducible component of the analytic set.

\end{enumerate}
\end{rems}
According to M. Chaperon \cite{chaperon-ast}[theorem 4, p.132], if the family of diffeomorphisms is abelian then there exists a formal diffeomorphism $\hat \Phi$ such that
$$
\hat \Phi_*F_i(D_jz)= D_j\hat \Phi_*F_i(z),\quad 1\leq i,j\leq l.
$$
We call the family of $\hat \Phi_*F_i$'s a {\bf formal normal form} of the family $F$.
Then we have the following corollary~:
\begin{coros}\label{invar-group}
Let $F$ be an abelian family of germs of holomorphic diffeomorphisms of $(\Bbb C^n,0)$.  Let us assume that $D$ is diophantine on $ResIdeal$. If the non-linear centralizer of $D$ is generated by the $x^{R_i}$'s then $F$ is holomorphically linearizable on ${\cal I}$.
\end{coros}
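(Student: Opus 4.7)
The plan is to apply Theorem~\ref{theo-invariant} with the monomial ideal ${\cal I}:=\text{ResIdeal}$, so that the diophantine hypothesis of the corollary becomes exactly the condition $(\omega(D,{\cal I}))$ required by the theorem.

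Since $F$ is abelian, the theorem of Chaperon quoted just above the statement produces a formal diffeomorphism $\hat\Psi$, tangent to the identity, such that $\hat G_i:=\hat\Psi_*F_i$ commutes with every $D_j$, $1\leq i,j\leq l$. By the very definition of ${\cal C}_D$, this amounts to $\hat G_i-D_ix\in{\cal C}_D$ for each $i$.

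The crucial observation is the algebraic translation of the hypothesis: if ${\cal C}_D$ is generated as a $\widehat{\cal O}_n^D$-module by the monomials $x^{R_1},\dots,x^{R_p}$, then every component of every element of ${\cal C}_D$ lies in the formal ideal $\hat{\cal I}:=\widehat{\cal O}_n\otimes\text{ResIdeal}$. Consequently $\hat\Psi_*F_i-D_ix\in(\hat{\cal I})^n$ for every $i$. A standard degree-by-degree adjustment of $\hat\Psi$ (subtracting its ${\cal C}_D$-component, which itself lies in $\hat{\cal I}^n$ and hence preserves the congruence modulo $\hat{\cal I}$) yields a formal diffeomorphism $\hat\Phi$ with zero projection on ${\cal C}_D$ and still linearizing $F$ on $\hat{\cal I}$. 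This is exactly the formal linearizability on $\hat{\cal I}$ of Definition~2.2.

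Theorem~\ref{theo-invariant} then delivers a germ of holomorphic diffeomorphism $\Phi$ with $\Phi_*F_i-D_ix\in({\cal I})^n$ for every $i$, which is the desired holomorphic linearization of $F$ on $\text{ResIdeal}$. All the analytic heavy lifting (the small-divisor estimates and the KAM-like iteration) is packaged inside Theorem~\ref{theo-invariant}; the substance of the corollary is the purely algebraic reduction above. The main point to check, and the principal obstacle, is that the generation of ${\cal C}_D$ by the resonant monomials indeed forces the Chaperon normal form to live in $\hat{\cal I}^n$; the only technical nuisance is the normalization of Definition~2.2, which is dealt with by the standard adjustment of $\hat\Psi$ mentioned above.
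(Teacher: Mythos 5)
Your overall plan is correct and matches the paper's (implicit) route: Chaperon's theorem puts the family into a formal normal form whose nonlinear part lies in ${\cal C}_D$, the generation hypothesis forces each component of any element of ${\cal C}_D$ into $\hat{\cal I}=\widehat{\cal O}_n\otimes\text{ResIdeal}$, hence $F$ is formally linearizable on $\hat{\cal I}$, and Theorem~\ref{theo-invariant} converts this into holomorphic linearizability on ${\cal I}$.

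One technical step deserves a correction. You say that one can pass from the Chaperon diffeomorphism $\hat\Psi$ to one with zero projection on ${\cal C}_D$ ``by subtracting its ${\cal C}_D$-component.'' That naive subtraction does not, in general, preserve the conjugacy relation, since conjugation is nonlinear in $\hat\Psi$. The honest fix is already supplied by the lemma immediately following Theorem~\ref{theo-invariant}: its proof shows, by an induction that never uses the $\mathcal{C}_D$-projection of $\Psi$, that the mere existence of \emph{some} tangent-to-identity formal $\Psi$ with $\Psi^{-1}\circ F_i\circ\Psi - D_i\in(\hat{\cal I})^n$ forces the explicitly constructed $\Phi$ (which has zero projection on ${\cal I}^n\cup{\cal C}_D$ by design) to linearize $F$ on $\hat{\cal I}$ as well. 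So the hypothesis of the theorem in the sense of Definition~2.2 is met; the ``adjustment'' is not an a posteriori surgery on $\hat\Psi$ but a re-run of the inductive construction, justified by that lemma. With this replacement of the subtraction argument, the proof is complete and coincides with the paper's intended argument.
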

\begin{coros}\label{lin-group}
Let $F$ be an abelian family of germs of holomorphic diffeomorphisms of $(\Bbb C^n,0)$.  Let us assume that the family $D$ is diophantine. If $F$ is formally linearizable, then $F$ is holomorphically linearizable in $(\Bbb C^n,0)$.
\end{coros}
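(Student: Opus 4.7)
The idea is to deduce the corollary directly from Theorem \ref{theo-invariant} with the trivial ideal ${\cal I}=(0)$. By the Remark following the definition of ``properly embedded'', the zero ideal qualifies as properly embedded (with ${\cal S}=\{x_1,\ldots,x_n\}$). Since $x^Q\notin(0)$ for every multi-index $Q$, the infimum defining $\omega_k(D,(0))$ is taken over the same set as that defining $\omega_k(D)$; hence $\omega_k(D,(0))=\omega_k(D)$ and the diophantine hypothesis $(\omega(D))$ of the corollary coincides with the condition $(\omega(D,(0)))$ required by the theorem. Moreover, the condition $\hat\Phi_*F_i-D_ix\in(\hat{\cal I})^n$ with ${\cal I}=(0)$ degenerates to $\hat\Phi_*F_i=D_ix$, i.e.\ ordinary formal linearization.

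The only remaining discrepancy is that the theorem requires the linearizing $\hat\Phi$ to have zero projection on the non-linear centralizer ${\cal C}_D$, whereas the corollary only assumes the existence of \emph{some} formal linearization $\hat\Psi$. I would close this gap by a degree-by-degree normalization. For any $C\in{\cal C}_D$, the formal map $Id+C$ commutes with each $D_i$ (directly from the definition of ${\cal C}_D$), hence so does $(Id+C)^{-1}$; therefore $(Id+C)^{-1}\circ\hat\Psi$ still conjugates each $F_i$ to $D_ix$. At each order, the ${\cal C}_D$-component of $\hat\Psi-Id$ can thus be eliminated by an appropriate choice of $C$ (the lowest nonzero homogeneous piece in ${\cal C}_D$ of $\hat\Psi - Id$ is exactly what one needs to subtract), inductively producing a formal linearization $\hat\Phi$ with vanishing projection on ${\cal C}_D$.

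With these two reductions in hand, Theorem \ref{theo-invariant} applies verbatim and yields a genuine germ of holomorphic diffeomorphism $\Phi$ satisfying $\Phi_*F_i=D_ix$ in a neighborhood of the origin, which is exactly the desired holomorphic linearization. I expect no new obstacle here: the entire small-divisor analysis has been absorbed into Theorem \ref{theo-invariant}, and this corollary simply records the cleanest specialization, in which the invariant analytic subset $V({\cal I})$ collapses to the full germ $(\Bbb C^n,0)$.
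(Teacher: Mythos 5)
Your proof is correct and takes essentially the same route as the paper, namely specializing Theorem~\ref{theo-invariant} to the zero ideal ${\cal I}=(0)$, which the paper's remark explicitly declares properly embedded with ${\cal S}=\{x_1,\ldots,x_n\}$, so that $\omega_k(D,(0))=\omega_k(D)$ and the diophantine hypotheses agree. The one point you spell out that the paper leaves implicit is the passage from ``some formal linearization exists'' to ``a formal linearization with zero projection on ${\cal C}_D$ exists''; your normalization by composing on the left with elements $Id+C$, $C\in{\cal C}_D$ (which commute with each $D_i$ and hence preserve the property of being a linearization), killing the ${\cal C}_D$-component of $\hat\Psi-Id$ degree by degree, is valid and in fact is exactly the detail needed for the paper's Lemma~2.1 induction to apply verbatim.
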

\begin{rems}
The condition that the non-linear centralizer of $D$ is generated by the $x^{R_i}$'s means: if $\mu_i^Q=\mu_{i,j}$ for some $Q\in \Bbb N^n_2$, $1\leq j\leq n$ and for all $1\leq i\leq l$, then 
$x^Q$ belongs to the ideal generated by $x^{R_1},\ldots, x^{R_p}$. This a very weak condition since only all but a finite number of resonances satisfy this condition.
\end{rems}
\begin{rems}
Since ${\cal I}$ is a monomial ideal, its zero locus is an intersection of unions of complex hyperplanes $\{z_{i_j}=0\}$.
\end{rems}
\begin{exs}
For instance, if the eigenvalues $\mu_1,\mu_2$ of germ of diffeomorphism $\phi$ in $(\Bbb C^2,0)$ satisfy $\mu_1\mu_2=1$ then, in a good holomorphic coordinates system $(z_1, z_2)$ of a neighborhood of the origin, the complex set $\{z_1z_2=0\}$ is invariant under $\phi$ as soon as $\mu_1$ is diophantine. The ideal ${\cal I}=(z_1z_2)$ is not properly embedded.
\end{exs}
%Let us consider the ring of invariant of $D$ :
%$$
%\widehat{\cal O}_n^D:=\{h\in\widehat{\cal O}_n\;|\;h\circ D_i(x)=h(x),\;i=1,\ldots,l\}.
%$$
%If not reduced to the constant, this ring is generated as an algebra by a finite number of monomials. This %mean that there exists monomials $x^{R_1},\ldots, x^{R_p}$ such that
%$$
%\widehat{\cal O}_n^D=\Bbb C[[x^{R_1},\ldots, x^{R_p}]].
%$$
%Let us assume that the centralizer of $D$ in the subspace of elements of $\widehat{\cal O}_n^n$ of order %greater or equal than $2$ is generated by $x_i\frac{\partial}{\partial x_i}$ over $\widehat{\cal O}_n^D$. This %means that if $\mu_i^Q=\mu_{i,j}$, for all $1\leq i\leq n$, where $Q\in \Bbb N^n_2$ and $1\leq j \leq n$ , %then $Q=l_1R_1+\cdots l_pR_p+E_j$ for some non-negative integers $l_i$ ($E_j$ denotes the $j$th vector of the %natural basis of $\Bbb Z^n$).
%Let ${\cal I}$ be the ideal of ${\cal O}_n$ generated by the monomials $x^{R_1},\ldots, x^{R_p}$. We shall %denote by $V({\cal I})$ the germ at the origin of the analytic set defined by ${\cal I}$.

We shall prove that there exists a holomorphic map $\Phi : (\Bbb C^n,0)\rightarrow (\Bbb C^n,0)$, tangent to the identity at the origin, such that
$$
\Phi^{-1}\circ F_i \circ \Phi (y)= G_i(y):= D_iy +g_i(y)\quad i=1,\ldots, l
$$ 
where the components of $g_i$ are non-linear holomorphic functions and belong
to the ideal ${\cal I}$. It is unique if we require that its projection on
${\cal I}^n\cup {\mathcal C}_D$ is zero.

Let us set $x_j=\Phi_j(y):= y_j+\phi_j(y)$, $j=1,\ldots, n$. Let us expand the equations $F_i \circ \Phi (y)=\Phi\circ G_i$, $i=1,\ldots,l$.
For all $1\leq j\leq n$ and all $i=1,\ldots,l$, we have
\begin{eqnarray*}
\mu_{i,j}y_j+g_{i,j}(y)+\phi_{j}(G_i(y)) & = & \mu_{i,j}(y_j+\phi_{j}(y))+f_{i,j}(\Phi (y))\\
g_{i,j}(y)+\phi_{j}(D_iy) +(\phi_{j}(G_i(y))-\phi_{j}(D_iy))& = & \mu_{i,j}\phi_{j}(y)+f_{i,j}(\Phi (y))\\
\end{eqnarray*}
We have, by definition, $\phi_{j}(D_iy)=\phi_j(\mu_{i,1}y_1,\ldots, \mu_{i,n}y_n)$.
Let us expand the functions at the origin~:
$$
f_{i,j}(y)=\sum_{Q\in\Bbb N^n_2}f_{i,j,Q}y^Q,\;g_{i,j}(y)=\sum_{Q\in\Bbb N^n_2}g_{i,j,Q}y^Q\text{ and }\phi_{j}(y)=\sum_{Q\in\Bbb N^n_2}\phi_{j,Q}y^Q.
$$
Then we have
\begin{equation}
\sum_{Q\in\Bbb N^n_2}\delta^i_{Q,j}\phi_{j,Q}y^Q + g_{i,j}(y) = f_{i,j}(\Phi (y))-(\phi_{j}(G_i(y))-\phi_{j}(D_iy))\label{conjugaison}
\end{equation}
where 
$$
\delta^i_{Q,j} := \mu_i^Q-\mu_{i,j},\quad\mu_i :=(\mu_{i,1},\ldots,\mu_{i,n}).
$$
%We emphasize that if $y^Q$ does not belong to the ideal ${\cal I}\subset ResIdeal$ then, for any $1\leq j\leq n$, there exists $1\leq i_0\leq n$ such that $\delta^{i_0}_{Q,j} \neq 0$.

Let $\{f\}_Q$ denote the coefficient of $x^Q$ in the Taylor expansion at the origin of $f$.
We define $\phi_{j,Q}$ and $g_{i,j,Q}$ by induction on $|Q|\geq 2$ in the following way :
\begin{itemize}
\item if $y^Q$ does not belong to ${\cal I}$ and $\max_i |\delta^{i}_{Q,j}|\neq 0$, then there exists $1\leq i_0\leq l$ such that $|\delta^{i_0}_{Q,j}|=\max_i |\delta^{i}_{Q,j}|$. We set 
\begin{eqnarray*}
\phi_{j,Q} & = & \frac{1}{\delta^{i_0}_{Q,j}}\left\{f_{i_0,j}(\Phi (y))-(\phi_{j}(G_{i_0}(y))-\phi_{j}(D_{i_0}y))\right\}_Q\\
g_{i,j,Q} & = & 0.
\end{eqnarray*}
\item If $y^Q$ does not belong to ${\cal I}$ and $\max_i |\delta^{i}_{Q,j}|=0$, then we have 
$$
\left\{f_{i_0,j}(\Phi (y))-(\phi_{j}(G_{i_0}(y))-\phi_{j}(D_{i_0}y))\right\}_Q=0
$$
and we set $\phi_{j,Q} = 0 = g_{i,j,Q}$.

\item If $y^Q$ belongs to ${\cal I}$, we set 
\begin{eqnarray*}
\phi_{j,Q} & = & 0\\
g_{i,j,Q} & = & \left\{f_{i,j}(\Phi (y))-(\phi_{j}(G_{i}(y))-\phi_{j}(D_{i}y))\right\}_Q.
\end{eqnarray*}
\end{itemize}
\begin{rems}
Assume that $G_{i}(y)-D_iy$ belongs to $(\widehat {\cal I})^n$. If $y^Q$ does not belong ${\cal I}$ then $\{(\phi_{j}(G_{i}(y))-\phi_{j}(D_{i}y))\}_Q=0$. In fact, let us Taylor expand $\phi_{j}$ at the point $D_{i}y$. We obtain 
$$
\phi_{j}(G_{i}(y))-\phi_{j}(D_{i}y)=\sum_{|\alpha|\geq 1}\frac{1}{\alpha !}D^{\alpha}\phi_{j}(D_{i}y)(G_{i}(y)-D_iy)^{\alpha}\in (\widehat {\cal I})^n.
$$
Moreover, if the $k$-jet of $G_{i}(y)-D_iy$ belongs to $(\widehat {\cal I})^n$ and if $|Q|=k+1$ then $\{(\phi_{j}(G_{i}(y))-\phi_{j}(D_{i}y))\}_Q=0$ since $\phi_j$ is of order $\geq 2$.
\end{rems}
\begin{lemms}
Assume that the family $F$ is formally linearizable on $\hat {\cal I}$. Then the formal diffeomorphism $\Phi$ defined above linearizes simultaneously the family $F$ on $\hat {\cal I}$ where $\hat {\cal I}:=\widehat{\cal O}_n\otimes {\cal I}$.
\end{lemms}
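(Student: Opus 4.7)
I would prove the lemma by strong induction on $|Q|\ge 2$, showing that the inductive formulas produce a consistent choice of $\phi_{j,Q}$ and $g_{i,j,Q}$ satisfying the order-$Q$ projection of equation~(\ref{conjugaison}) for every $i=1,\ldots,l$; once this consistency is in hand the lemma is immediate, since by construction $g_{i,j,Q}=0$ whenever $y^Q\notin{\cal I}$, so $G_i-D_iy\in(\widehat{\cal I})^n$. The key quantity is
$$
A_{i,j,Q}:=\bigl\{f_{i,j}(\Phi(y))-(\phi_{j}(G_{i}(y))-\phi_{j}(D_{i}y))\bigr\}_Q,
$$
which depends only on coefficients of $\phi$ and $g_i$ strictly below order $|Q|$ (because $f_{i,j}$, $\phi_j$, and $G_i-D_iy$ are all of order $\ge 2$), and which may be interpreted as the $y^Qe_j$-coefficient of $\tilde F_i-D_iy$, where $\tilde F_i:=(\Phi^{(|Q|-1)})^{-1}F_i\,\Phi^{(|Q|-1)}$ is the partial conjugation up to order $|Q|-1$. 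The case $y^Q\in{\cal I}$ is trivial: set $g_{i,j,Q}=A_{i,j,Q}$, $\phi_{j,Q}=0$.

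For $y^Q\notin{\cal I}$ with some $\delta^{i_0}_{Q,j}\ne 0$, consistency across $i$ reduces to the compatibility $\delta^{i_0}_{Q,j}A_{i,j,Q}=\delta^{i}_{Q,j}A_{i_0,j,Q}$ for every $i$. I would extract this from the commutator identity $[\tilde F_i,\tilde F_{i_0}]=0$ (inherited from $F_iF_{i_0}=F_{i_0}F_i$): expanding, the principal contribution at $y^Qe_j$ is exactly $\delta^{i_0}_{Q,j}A_{i,j,Q}-\delta^i_{Q,j}A_{i_0,j,Q}$, while every other Taylor contribution has the form $\partial^\alpha\tilde f_i(D_{i_0}y)\,\tilde f_{i_0}(y)^{\alpha}$ (or the $(i,i_0)$-symmetric analogue) with $|\alpha|\ge 1$. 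Each such correction contains a factor of some $\tilde f_k$ of order strictly less than $|Q|$, which by the inductive hypothesis lies in $\widehat{\cal I}$, so the whole correction lies in $(\widehat{\cal I})^n$ and its $y^Q$-coefficient vanishes when $y^Q\notin{\cal I}$. The displayed compatibility follows.

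The remaining case $y^Q\notin{\cal I}$ with all $\delta^i_{Q,j}=0$ -- a common resonance lying outside~${\cal I}$ -- is where the formal linearizability hypothesis is genuinely used, and is the main obstacle. My strategy is to exhibit a formal linearization $\hat\Psi'$ of $F$ on $\widehat{\cal I}$ agreeing with $\Phi^{(|Q|-1)}$ modulo $(y)^{|Q|}$; once such $\hat\Psi'$ is known, the $y^Qe_j$-coefficient of $(\hat\Psi')^{-1}F_i\hat\Psi'-D_iy$ lies in $(\widehat{\cal I})^n$ and so vanishes for $y^Q\notin{\cal I}$, which via the conjugation equation (and $\delta^i_{Q,j}=0$) translates into $A_{i,j,Q}=0$. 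Constructing $\hat\Psi'$ from an arbitrary linearization $\hat\Psi$ requires a sub-induction on the order: at each order $R<|Q|$ I would use composition on the right with elements of $Id+(\widehat{\cal I})^n$ (which preserves the $\widehat{\cal I}$-linearization property by a direct Taylor calculation exploiting the ideal property of $\widehat{\cal I}$) to kill the $(\widehat{\cal I})^n$-projection of the linearization at order $R$, and use the freedom in the non-ideal common-resonance coefficients (which do not constrain the linearization, as their own equation reduces to $0=0$) to enforce the normalization. The commutativity compatibility of the previous paragraph ensures that these adjustments are mutually consistent and that the Case~$1$ coefficients at each lower order agree with those of $\Phi^{(|Q|-1)}$, producing the desired $\hat\Psi'$. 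The hardest part will be controlling the interaction of such compositions with the Taylor expansions involved; the treatment parallels the one for commuting vector fields in~\cite{stolo-ihes}, which the author cites as the inspiration for the present argument.
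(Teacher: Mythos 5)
Your Cases 1 and 2 are correct and match the paper's argument: the compatibility $\delta^{i_0}_{Q,j}A_{i,j,Q}=\delta^{i}_{Q,j}A_{i_0,j,Q}$ is extracted from commutativity exactly as the paper derives its relation~(\ref{compat}), using the inductive hypothesis that lower-order parts of $\tilde f_i$ lie in $(\widehat{\cal I})^n$ to kill the correction terms. The problem is Case~3. There you try to build a linearization $\hat\Psi'$ that agrees with $\Phi^{(|Q|-1)}$ \emph{exactly}, and the normalization step in your sub-induction is flawed: modifying the non-ideal common-resonance coefficients of $\hat\Psi$ (say by right composition with $Id+u$, $u\in{\cal C}_D\setminus(\widehat{\cal I})^n$) does \emph{not} in general preserve the $\widehat{\cal I}$-linearization property. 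The conjugated germ picks up a contribution of the form $Dg_i(y)\,u(y)$, and since $u$ lies outside the ideal and differentiation destroys ideal membership, this contribution escapes $(\widehat{\cal I})^n$. Concretely, for $n=3$, $D=\mathrm{diag}(\mu,\mu^{-1},1)$, ${\cal I}=(y_3)$, $u=y_1y_2\,e_3\in{\cal C}_D\setminus(\widehat{\cal I})^n$ and $g_i=h(y)y_3\,e_m$, the term $Dg_i\cdot u$ contains $h(y)y_1y_2\,e_m\notin(\widehat{\cal I})^n$. Your justification that these coefficients ``do not constrain the linearization, as their own equation reduces to $0=0$'' is only true at the order where they sit; it says nothing about the higher-order equations they feed into.

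Both halves of the sub-induction are in fact unnecessary. The paper's definition of ``formally linearizable on $\widehat{\cal I}$'' already requires the linearizing $\hat\Psi$ to have \emph{zero projection on ${\cal C}_D$}, so the common-resonance coefficients of $\hat\Psi$ vanish by hypothesis, matching those of $\Phi$; and matching the ideal part is not needed either. Cases~1 and~2 at lower order already give $\Phi-\hat\Psi\in(\widehat{\cal I})^n$ below order $|Q|$; Taylor-expanding $f_{i,j}(\Phi)-f_{i,j}(\hat\Psi)$ yields a sum each term of which contains a factor $(\Phi-\hat\Psi)^\alpha$ with $|\alpha|\ge 1$, hence lies in $\widehat{\cal I}$, so $\{f_{i,j}(\Phi)\}_Q=\{f_{i,j}(\hat\Psi)\}_Q$ whenever $y^Q\notin{\cal I}$. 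Applying the conjugation equation for $\hat\Psi$ at a common resonance $Q\notin{\cal I}$ then forces $\{f_{i,j}(\hat\Psi)\}_Q=0$ and therefore $A_{i,j,Q}=0$. This is the paper's route: compare $\Phi$ to $\hat\Psi$ \emph{modulo $(\widehat{\cal I})^n$} rather than trying to make them coincide, which short-circuits your entire sub-induction and avoids the flawed normalization.
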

\begin{proof}
Let $\Psi$ be a formal diffeomorphism tangent to identity at the origin and linearizing formally the family $F$ on $\hat {\cal I}$. For all $1\leq i,j\leq j\leq l$, we have 
$$
F_i\circ F_j  =  F_j\circ F_i\;\;\text{ thus }\;\; F_i\circ F_j\circ \Psi  =  F_j\circ F_i\circ \Psi.
$$
Therefore, we have
$$
D_iD_j\Psi+D_i(f_j\circ \Psi) +f_i(D_j\Psi+f_j\circ \Psi) =  D_jD_i\Psi+D_i(f_i\circ \Psi) +f_j(D_i\Psi+f_i\circ \Psi).
$$
For each $i$, let us consider the linear diffeomorphism $\tilde D_i\in \widehat{\cal O}_n^n$ with linear part $D_i$~:
$$ 
\tilde D_i:x\mapsto D_i.x. 
$$
Let us define the operator $T_i(U):= U\circ \tilde D_i-D_iU$ defined on $\widehat{\cal O}_n^n$ to itself.
The previous equality reads
$$
T_i(f_j\circ\Psi)+f_j(D_i\Psi+f_i\circ \Psi)-f_j(\Psi\circ \tilde D_i)= T_j(f_i\circ\Psi)+ f_i(D_j\Psi+f_j\circ \Psi)-f_i(\Psi\circ \tilde D_j)
$$
%\begin{eqnarray*}
%f_j(\Psi\circ D_i)-D_i(f_j\circ \Psi) & = & f_i(\Psi\circ D_j)-D_j(f_i\circ \Psi)\\
%+ f_j(D_i\Psi+f_i\circ \Psi)-f_j(\Psi\circ D_i)& & + f_i(D_j\Psi+f_j\circ \Psi)-f_i(\Psi\circ D_j).
%\end{eqnarray*}
Moreover, we have $F_i\circ \Psi = \Psi\circ G_i$. Let us write $G_i(z)=D_iz+g_i(z)$, where $g_i$ is a map vanishing at first order at the origin.
Hence, we have
\begin{eqnarray*}
f_i(D_j\Psi+f_j\circ \Psi)-f_i(\Psi\circ \tilde D_j) & = & f_i\circ F_j\circ \Psi -f_i(\Psi\circ \tilde D_j) \\
&= & f_i\circ \Psi\circ G_j-f_i\circ\Psi\circ \tilde D_j \\
& = & D(f_i\circ \Psi)(\tilde D_j)g_j+\cdots\\
\end{eqnarray*}
Assume the $G_i$'s are linear on ${\cal I}$ up to order $k\geq 2$. This means that, for any $1\leq m\leq n$ and any $1\leq i\leq l$, the $k$-jet $J^k(g_{i,m})$ belongs to ${\cal I}$. The previous computation shows that the $(k+1)$-jet of $f_i(D_j\Psi+f_j\circ \Psi)-f_i(\Psi\circ \tilde D_j)$ depends only on the $k$-jet of $g_j$ and belongs to ${\cal I}$. The same is true for $\Psi_{j}(G_{i}(y))-\Psi_{j}(D_{i}y)$. Therefore, if $Q\in \Bbb N^n_2$ with $|Q|=k+1$ is such that $x^Q$ does not belong to ${\cal I}$, then we have 
$$
\{f_j(\Psi\circ \tilde D_i)-D_i(f_j\circ \Psi)\}_Q  =  \{f_i(\Psi\circ \tilde D_j)-D_j(f_i\circ \Psi)\}_Q;
$$
that is, for all $1\leq m\leq n$,
\begin{equation}\label{compat}
(\mu_{i}^Q-\mu_{i,m})\{f_{j,m}\circ \Psi\}_Q  = (\mu_{j}^Q-\mu_{j,m})\{f_{i,m}\circ \Psi\}_Q.
\end{equation}
Let us show by induction on $|Q|\geq 2$ that if $x^Q$ does not belong to ${\cal I}$ and $\max_i |\delta^{i}_{Q,j}|\neq 0$, then $\psi_{j,Q}=\phi_{j,Q}$.
In fact, assume that it is true up to order $k$ and let $|Q|=k+1$, $x^Q\not\in {\cal I}$. According to Taylor expansion, we have
$$
\{f_{i,j}(\Psi (y))\}_Q=\{f_{i,j}(\Phi (y))\}_Q.
$$
Thus, according to $(\ref{compat})$, we have
$$
(\mu_{i}^Q-\mu_{i,m})\{f_{j,m}\circ \Phi\}_Q  = (\mu_{j}^Q-\mu_{j,m})\{f_{i,m}\circ \Phi\}_Q.
$$
If $|\delta^{i_0}_{Q,j}|=\max_i |\delta^{i}_{Q,j}|\neq 0$, then $g_{i,j,Q}:=0$ and
$$
\delta^{i_0}_{Q,j}\phi_{j,Q}:= \{f_{i_0,j}\circ \Phi\}_Q=\{f_{i_0,j}\circ \Psi\}_Q=\delta^{i_0}_{Q,j}\psi_{j,Q}.
$$

This means that equation $(\ref{conjugaison})$ is solved by induction and that $\Phi$ linearizes formally the $F_i$'s on $\hat{\cal I}$.
\end{proof}

Let $\rho$ be a linear anti-holomorphic involution satisfying the assumptions of the
theorem. We have $F_i\circ\Phi = \Phi\circ G_i$ where $G_i$ is linear along $\widehat{\cal I}$. Hence, we have 
$$
(\rho \circ F_i\circ \rho)\circ(\rho\circ\Phi\circ\rho) = (\rho\circ\Phi\circ\rho)\circ (\rho \circ G_i\circ \rho).
$$
Let us set $\tilde F_i:=\rho \circ F_i\circ \rho$. By assumptions, $\tilde F_i$ belongs to the group generated by the $F_i$'s. Since $\rho^*\widehat{\cal I}\subset \overline{\widehat{\cal I}}$, then $\rho \circ G_i\circ \rho$ is a formal
diffeomorphism which is linear on $\widehat{\cal I}$. 
%Its restriction to it is  equal to the mapping $x\mapsto D\tilde F_i(0)x$. 
By assumptions, the projection of $\rho\circ\Phi\circ\rho-Id $ onto ${\cal I}\cup {\cal C}_D$ vanishes
identically.  By uniqueness, we have $\rho\circ\Phi\circ\rho=\Phi$ since $\Phi$ linearizes $\tilde F_i$ on ${\cal I}$.

We shall prove, by using the majorant method, that $\Phi$ actually converges
on a polydisc of positive radius centered at the origin. Let us define $\Bbb
N_2^n\setminus \hat{\cal I}$ to be the set of multiindices $Q\in \Bbb N^n$
such that $|Q|\geq 2$ and $x^Q\not\in \widehat{\cal I}$. Let $f=\sum_Q f_Qx^Q$
and $g=\sum_Q g_Qx^Q$ be formal power series. We shall say that $g$ {\it
  dominates} if $|f_Q|\leq |g_Q|$ for all multiindices $Q$. We define $\bar f=\sum_Q |f_Q|x^Q$.

First of all, for all $1\leq j\leq n$ and all $ Q\in \Bbb N_2^n\setminus \hat{\cal I}$ such that $\max_{1\leq i\leq l}|\delta_{Q,j}^i|\neq 0$, we have
$$
|\phi_{j,Q}||\delta_{Q,j}|=|\{f_{i_0(Q),j}(\Phi)\}_Q|\leq \{\bar f_{i_0(Q),j}(y+\bar \phi)\}_Q
$$
where $|\delta_{Q,j}|=\max_{1\leq i\leq l}|\delta_{Q,j}^i|=|\delta^{i_0(Q,j)}_{Q,j}|$. In fact, $ \{f_i\circ \Phi\circ G_j-f_i\circ\Phi\circ D_j \}_Q=0$ whenever $ Q\in \Bbb N_2^n\setminus \hat{\cal I}$. This inequality still holds  if $\max_{1\leq i\leq l}|\delta_{Q,j}^i|= 0$. Let us set 
\begin{itemize}
\item $\delta_{Q}:= \min\{|\delta_{Q,j}|, {1\leq j\leq n}$ such that  $\delta_{Q,j}\neq 0\}$,\\
\item $\delta_{Q}:=0$ if $\max_{1\leq i\leq l}|\delta_{Q,j}^i|= 0$.
\end{itemize}

Let us sum over $1\leq j\leq n$ the previous inequalities. Let us first notice that, since $\phi_{j,Q}=0$ if $\delta_{Q,j}=0$, we have $\delta_Q\phi_{j,Q}\leq \phi_{j,Q}\delta_{j,Q}$ in all cases. Hence, we obtain for all $Q\in \Bbb N_2^n\setminus \hat{\cal I}$,
$$
\delta_Q\sum_{j=1}^n{|\phi_{j,Q}|}\leq\sum_{j=1}^n{|\phi_{j,Q}||\delta_{Q,j}|}\leq \left\{\sum_{j=1}^n\bar f_{i_0(Q,j),j}(y+\bar \phi)\right\}_Q\leq \left\{\sum_{i=1}^l\left(\sum_{j=1}^n\bar f_{i,j}\right)(y+\bar \phi)\right\}_Q.
$$

Since $\sum_{i=1}^l\sum_{j=1}^n f_{i,j}$ vanishes at the origin with its derivative as well, there exist positives constants $a,b$ such that
$$
\sum_{i=1}^l\sum_{j=1}^n{f_{i,j}}\prec \frac{a\left(\sum_{j=1}^n{x_j}\right)^2}{1-b\left(\sum_{j=1}^n{x_j}\right)}.
$$
Since the Taylor expansion of the right hand side has non-negative coefficients, we obtain
$$
\delta_Q\tilde\phi_Q\leq \left\{\frac{a\left(\sum_{j=1}^n{y_j+\tilde \phi}\right)^2}{1-b\left(\sum_{j=1}^n{y_j+ \tilde \phi}\right)}\right\}_Q
$$
where we have set $\tilde\phi_Q:=\sum_{j=1}^n{|\phi_{j,Q}|}$ and $\tilde \phi=\sum_{Q\in \Bbb N_2^n}\tilde\phi_Qx^Q$. Here, we have set $\tilde\phi_Q=0$ whenever $\delta_Q=0$. 

Let us define the formal power series $\sigma(y)=\sum_{Q\in \Bbb N_2^n}{\sigma_Qy^Q}$ as follow :
\begin{eqnarray*}
\forall Q\in \Bbb N_2^n\setminus (\Bbb N_2^n\setminus \hat{\cal I})\;\;\; \sigma_Q & = & 0\\
\forall Q\in \Bbb N_2^n\setminus \hat{\cal I}\;\;\; \sigma_Q & = & \left\{\frac{a\left(\sum_{j=1}^n{y_j+ \sigma}\right)^2}{1-b\left(\sum_{j=1}^n{y_j+ \sigma}\right)}\right\}_Q
\end{eqnarray*}
\begin{lemms}\cite{Stolo-dulac}[Lemme 2.1]
The series $\sigma$ is convergent in a neighborhood of the origin $0\in \Bbb C^n$.
\end{lemms}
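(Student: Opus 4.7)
The strategy is to dominate $\sigma$ by a one-variable majorant constructed explicitly via the implicit function theorem.

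First I introduce the auxiliary series $\hat\sigma\in\Bbb C[[y_1,\ldots,y_n]]$ defined as a \emph{genuine} fixed point, without the vanishing constraint on $\widehat{\cal I}$:
$$
\hat\sigma = \frac{a\bigl(\sum_{j=1}^n y_j+\hat\sigma\bigr)^2}{1-b\bigl(\sum_{j=1}^n y_j+\hat\sigma\bigr)}.
$$
Looking for $\hat\sigma$ of the form $s(y_1+\cdots+y_n)$ and setting $t=\sum_{j=1}^n y_j$, $u=t+s$, this reduces to the quadratic equation $(a+b)u^2-(1+bt)\,u+t=0$, whose root vanishing at $t=0$ is
$$
u(t)=\frac{(1+bt)-\sqrt{(1+bt)^2-4(a+b)t}}{2(a+b)}.
$$
This is holomorphic on a disc $\{|t|<r_0\}$ of positive radius, so $s(t)=u(t)-t$ is holomorphic there, and $\hat\sigma(y)=s(y_1+\cdots+y_n)$ converges on the polydisc $\{|y_1|+\cdots+|y_n|<r_0\}$. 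Iterating $s_{k+1}=a(t+s_k)^2/(1-b(t+s_k))$ from $s_0=0$ shows, in passing, that $s(t)$ has non-negative Taylor coefficients.

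Next, I prove the majorization $\sigma_Q\leq\hat\sigma_Q$ for every $Q\in\Bbb N_2^n$ by induction on $|Q|$. The operator $\tau\mapsto a\tau^2/(1-b\tau)$ has a non-negative Taylor expansion, and because $\sigma$ vanishes up to order $1$, the coefficient of $y^Q$ of the right-hand side of the recursion only involves $\sigma_{Q'}$ with $|Q'|<|Q|$. Hence, if $\sigma_{Q'}\leq\hat\sigma_{Q'}$ for all $|Q'|<|Q|$, the coefficient-wise monotonicity of this operator gives
$$
\Bigl\{\tfrac{a(\sum_j y_j+\sigma)^2}{1-b(\sum_j y_j+\sigma)}\Bigr\}_Q \;\leq\; \Bigl\{\tfrac{a(\sum_j y_j+\hat\sigma)^2}{1-b(\sum_j y_j+\hat\sigma)}\Bigr\}_Q \;=\; \hat\sigma_Q.
$$
When $x^Q\in\widehat{\cal I}$ the value $\sigma_Q$ is set to $0\leq\hat\sigma_Q$ by definition; otherwise $\sigma_Q$ equals the left-hand side above. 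In both cases $\sigma_Q\leq\hat\sigma_Q$, completing the induction.

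The only real subtlety is the asymmetric definition of $\sigma$, which artificially kills its coefficients on $\widehat{\cal I}$ and thus prevents viewing $\sigma$ as a genuine fixed point of a contraction. The point of the argument above is that truncating some coefficients to zero can only decrease all subsequent coefficients produced by a recursion whose underlying operator has a non-negative Taylor expansion; this observation lets the induction proceed uniformly in whether or not $x^Q\in\widehat{\cal I}$. The conclusion $\sigma\prec\hat\sigma$ together with the explicit convergence of $\hat\sigma$ shows that $\sigma$ is holomorphic on a polydisc around the origin, as claimed.
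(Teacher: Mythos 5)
Your proof is correct and follows the standard majorant-series method: reduce to a one-variable problem via $t=\sum_j y_j$, solve the associated quadratic to get an explicit holomorphic majorant $\hat\sigma$ with non-negative coefficients, and then verify $\sigma\prec\hat\sigma$ by induction using the monotonicity of the recursion operator. This is essentially the argument used in the cited Lemme 2.1 of the Dulac paper; your added observation — that killing the coefficients indexed by $\widehat{\cal I}$ can only decrease subsequent coefficients and is therefore harmless — correctly accounts for the slight generalization of $\sigma$ used in the present paper.
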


Let us define the sequence $\{\eta_Q\}_{Q\in \Bbb N^n_1\setminus \hat{\cal I}}$ of positive number  as follow~:\\
\ben
\item $\forall P\in \Bbb N^n_1\setminus \hat{\cal I}$ such that $|P|=1$, $\eta_{P}  =  1$ ( such multiindices exists. ),
\item $\forall Q\in \Bbb N^n_2\setminus \hat{\cal I}$ with $\delta_Q\neq 0$
$$
\delta_Q \eta_Q  = \max_{{\substack{ Q_j\in \Bbb N^n_1,S\in \Bbb N^n  \\
Q_1+\cdots+Q_p+S=Q  }}}{\eta_{Q_1}\cdots\eta_{Q_p}},
$$
the maximum been taken over the sets of $p+1$, $1\leq p\leq |Q|$, multiindices $Q_1,\ldots,Q_p,S$ such that
$\forall 1\leq j\leq p,\;Q_j\in \Bbb N_1^n,\; |Q_j|<|Q|$, $S\in \Bbb N^n$. These sets are not empty.
\item $\forall Q\in \Bbb N^n_2\setminus \hat{\cal I}$ with $\delta_Q= 0$, $\eta_Q =0$.
\een
This sequence is well defined. In fact, if $Q\in \Bbb N^n_2\setminus \hat{\cal I}$, then there exists
 multiindices $Q_1,\ldots,Q_p,S$ such that $Q=Q_1+\ldots+Q_p+S$, $\forall 1\leq j\leq p,\;Q_j\in \Bbb N_1^n,\; |Q_j|<|Q|,\;S\in \Bbb N^n$.
In this case, $\forall 1\leq j\leq p,\;\;Q_j\in \Bbb N_1^n\setminus \hat{\cal I}$.

The following lemmas are the key points.
\begin{lemms}\cite{Stolo-dulac}[Lemme 2.2]\label{majoration}
For all $Q\in \Bbb N^n_2\setminus \hat{\cal I}$, we have  $\tilde \phi_Q\leq \sigma_Q\eta_Q$.
\end{lemms}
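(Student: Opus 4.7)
The plan is to prove the inequality by strong induction on $|Q|$. The essence of the argument is purely combinatorial: the inequality already obtained,
$$
\delta_Q \tilde\phi_Q \leq \left\{\frac{a\bigl(\sum_{j=1}^n y_j+\tilde\phi\bigr)^2}{1-b\bigl(\sum_{j=1}^n y_j+\tilde\phi\bigr)}\right\}_Q ,
$$
has its right-hand side built as a formal series in $\tilde\phi$ with non-negative coefficients. Expanding the rational fraction as $a\sum_{p\geq 2} b^{p-2}\bigl(\sum_j y_j+\tilde\phi\bigr)^p$ and extracting the coefficient of $y^Q$ yields a sum, over decompositions $Q=Q_1+\cdots+Q_p$ with each $Q_i\in\mathbb N^n_1$ and $p\geq 2$, of products of factors equal to $1$ (when $|Q_i|=1$) or to $\tilde\phi_{Q_i}$ (when $|Q_i|\geq 2$). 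Since $p\geq 2$ forces $|Q_i|<|Q|$, the induction hypothesis is available for each non-linear factor.

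First I would dispose of the degenerate case: if $\delta_Q=0$ then the construction of $\Phi$ forces $\phi_{j,Q}=0$ for every $j$, so $\tilde\phi_Q=0$; simultaneously $\eta_Q=0$ by clause (3) of its definition, and the inequality $\tilde\phi_Q\leq\sigma_Q\eta_Q$ is trivial. For the base case $|Q|=2$, the right-hand side involves only the quadratic coefficients of the majorant, and the inequality reduces to $\delta_Q \tilde\phi_Q\leq a$ times a combinatorial factor matching $\delta_Q\sigma_Q$; the induction hypothesis is not needed since no $Q_i$ has $|Q_i|\geq 2$.

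For the inductive step with $\delta_Q\neq 0$ and $Q\in\mathbb N^n_2\setminus\hat{\mathcal I}$, since $\hat{\mathcal I}$ is a monomial ideal, every summand $Q_1+\cdots+Q_p=Q$ automatically has all $Q_i\notin\hat{\mathcal I}$, so $\eta_{Q_i}$ and $\sigma_{Q_i}$ are well defined. Substituting the induction hypothesis $\tilde\phi_{Q_i}\leq \sigma_{Q_i}\eta_{Q_i}$ (for $|Q_i|\geq 2$) and using $\eta_{Q_i}=1$ for $|Q_i|=1$, each term in the expansion is bounded by
$$
\bigl(\text{same term with }\tilde\phi\text{ replaced by }\sigma\bigr)\cdot \eta_{Q_1}\cdots\eta_{Q_p}.
$$
The defining relation of $\eta_Q$ (taking the decomposition with $S=0$) gives $\eta_{Q_1}\cdots\eta_{Q_p}\leq \delta_Q\eta_Q$, so factoring out $\delta_Q\eta_Q$ bounds the whole sum by $\delta_Q\eta_Q$ times the coefficient of $y^Q$ in the same majorant evaluated with $\sigma$ in place of $\tilde\phi$. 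By the defining equation of $\sigma$, that coefficient is exactly $\sigma_Q$. Dividing by $\delta_Q$ yields $\tilde\phi_Q\leq\sigma_Q\eta_Q$.

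The main obstacle is bookkeeping rather than depth: one must verify that the class of decompositions $Q=Q_1+\cdots+Q_p$ appearing in the expansion of the majorant is contained in the class (with $S=0$) used to define $\eta_Q$, and that $\hat{\mathcal I}$ being monomial guarantees all sub-indices stay outside $\hat{\mathcal I}$, so that the induction and the comparison with $\sigma$ make sense term by term.
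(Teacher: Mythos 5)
Your proof is correct and follows the expected majorant-series induction (the paper only cites Lemme 2.2 of \cite{Stolo-dulac}, whose argument is exactly this: strong induction on $|Q|$, term-by-term comparison of the coefficient of $y^Q$ in the geometric expansion of the rational majorant, and the bound $\eta_{Q_1}\cdots\eta_{Q_p}\leq\delta_Q\eta_Q$ coming from the $S=0$ decompositions in the definition of $\eta_Q$). One small wording slip in the base-case paragraph: for $|Q|=2$ the established inequality gives $\delta_Q\tilde\phi_Q\leq\sigma_Q$ and the definition gives $\delta_Q\eta_Q=1$, so the combinatorial factor should be described as matching $\sigma_Q$ rather than $\delta_Q\sigma_Q$; this does not matter, since your inductive-step estimate already covers $|Q|=2$ uniformly (all $Q_i$ are then of length $1$, so all factors equal $1$ and all $\eta_{Q_i}=1$).
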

\begin{lemms}\cite{Stolo-dulac}[Lemme 2.3]\label{lemm-divis}
There exists a constant $c>0$ such that $\forall Q\in \Bbb N_2^n\setminus \hat{\cal I},\;\;\eta_Q\leq c^{|Q|}$.
\end{lemms}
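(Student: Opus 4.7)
The plan is to unfold the recursive definition of $\eta_Q$ into a finite rooted tree, rewrite $\eta_Q$ as a product of inverse small divisors over the internal nodes of that tree, dyadically group those nodes by the size of their multiindex, and then apply the Brjuno-type summability condition $(\omega(D,{\cal I}))$. Concretely, for each $Q\in\Bbb N_2^n\setminus\hat{\cal I}$ with $\delta_Q\ne 0$ fix a decomposition $Q=Q_1+\cdots+Q_p+S$ realizing the maximum in the recursion, and iterate on those children with $|Q_j|\ge 2$. Since $|Q_j|<|Q|$ and $Q_j\in\Bbb N_1^n\setminus\hat{\cal I}$ (as observed right before the lemma), the process terminates in a finite tree ${\cal T}(Q)$ whose internal nodes $\tilde Q$ satisfy $|\tilde Q|\ge 2$, $x^{\tilde Q}\notin\hat{\cal I}$ and $\delta_{\tilde Q}\ne 0$, and whose leaves have $|Q'|=1$ (so $\eta_{Q'}=1$). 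Multiplying the recursion along ${\cal T}(Q)$ gives
$$
\eta_Q=\prod_{\tilde Q\in{\cal T}_{\mathrm{int}}(Q)}\delta_{\tilde Q}^{-1}.
$$

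Group the internal nodes dyadically: for $k\ge 1$ let ${\cal L}_k=\{\tilde Q\in{\cal T}_{\mathrm{int}}(Q):2^{k-1}<|\tilde Q|\le 2^k\}$. Because $x^{\tilde Q}\notin{\cal I}$ and $\delta_{\tilde Q}\ne 0$, the very definition of $\omega_k(D,{\cal I})$ gives $\delta_{\tilde Q}\ge \omega_k(D,{\cal I})$ for $\tilde Q\in{\cal L}_k$, so that
$$
\ln\eta_Q\le\sum_{k\ge 1}|{\cal L}_k|\bigl(-\ln\omega_k(D,{\cal I})\bigr).
$$
The combinatorial heart of the proof is the bound $|{\cal L}_k|\le c_0\,|Q|/2^k$. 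The key observation is that at any $\tilde Q\in{\cal L}_k$ the decomposition satisfies $\sum_j|Q_j|\le|\tilde Q|\le 2^k$, so at most one child $Q_j$ can again belong to ${\cal L}_k$ (two such children would contribute combined mass $>2^k$). Thus the elements of ${\cal L}_k$ form a disjoint union of chains in ${\cal T}(Q)$, and a careful accounting of the mass $|\tilde Q|$ along each chain (using that along any root-to-leaf path the sizes are strictly decreasing and bounded by $|Q|$, and that $\sum_{\text{leaves}}|Q'|\le|Q|$) yields the desired linear bound. This is precisely the content of Lemme~2.3 of \cite{Stolo-dulac}, and the argument transfers verbatim to the present setting since the recursion for $\eta_Q$ is formally identical.

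Plugging the counting bound into the previous display,
$$
\ln\eta_Q\le c_0|Q|\sum_{k\ge 1}2^{-k}\bigl(-\ln\omega_k(D,{\cal I})\bigr)\le C|Q|,
$$
where $C<+\infty$ by the Brjuno-type hypothesis $(\omega(D,{\cal I}))$. Setting $c=e^C$ gives $\eta_Q\le c^{|Q|}$, as desired. The single substantive obstacle in the plan is the combinatorial counting estimate $|{\cal L}_k|\le c_0|Q|/2^k$; once it is granted, the dyadic grouping and the Brjuno condition combine in a completely mechanical way. All the other steps (unfolding the recursion, the lower bound $\delta_{\tilde Q}\ge\omega_k(D,{\cal I})$, and the final summation) are bookkeeping.
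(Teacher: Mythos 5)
The approach you take is genuinely different from the paper's, and it contains a real gap at the step you yourself flag as the ``combinatorial heart.'' The paper does not bound the number of internal nodes $\tilde Q$ of the unfolded tree that satisfy $2^{k-1}<|\tilde Q|\le 2^k$; it bounds the number of nodes carrying a \emph{small divisor}, i.e.\ the count $\phi^{(k)}(Q)$ of factors $1/\delta_{Q'}$ in the product with $0\ne\delta_{Q'}<\theta\omega_k(D,{\cal I})$. The estimate $\phi^{(k)}(Q)\le 2n|Q|/2^k$ (Proposition~\ref{estim-bruno}, i.e.\ Lemme~2.8 of \cite{Stolo-dulac}) is proved from the spacing statement of Lemma~\ref{lemm-tech}: if $\psi^{(k)}_j(Q)=1$ and $Q=P+P'$ with $|P|\le 2^k-1$ then $\psi^{(k)}_j(P')=0$. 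In other words, two small divisors of the same dyadic scale $k$ and in the same coordinate direction $j$ cannot occur within $2^k$ steps of each other along a descending chain of the tree. That is the mechanism that caps the count at $O(|Q|/2^k)$.

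Your substitute bound $|{\cal L}_k|\le c_0|Q|/2^k$, with ${\cal L}_k$ defined purely by the size $|\tilde Q|$ of the multiindex, is false in general. Take a realization of the recursion in which the chosen decomposition of each node is $\tilde Q=\tilde Q_1+S$ with $|\tilde Q_1|=|\tilde Q|-1$ (a unary chain). Then the internal nodes run through every size from $|Q|$ down to $2$, so for every $k$ with $2^k\le |Q|$ one has $|{\cal L}_k|=2^{k-1}$, and $2^{k-1}\le c_0|Q|/2^k$ fails as soon as $2^k\gtrsim\sqrt{|Q|}$. Your observation that each node of ${\cal L}_k$ has at most one child in ${\cal L}_k$ correctly identifies the chain structure, but the sketched ``mass accounting'' does not limit the length of a chain: a chain in ${\cal L}_k$ can contain up to $2^{k-1}$ nodes, and the facts you invoke (strictly decreasing sizes along root-to-leaf paths, $\sum_{\rm leaves}|Q'|\le|Q|$) do not shorten it. Consequently the displayed estimate $\ln\eta_Q\le\sum_k|{\cal L}_k|(-\ln\omega_k)$ is not $O(|Q|)$: in the chain example its top term is of order $|Q|\cdot(-\ln\omega_{\log_2|Q|})$, which the Brjuno condition only controls as $o(|Q|^2)$, not $O(|Q|)$. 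The source of the loss is that you replace $-\ln\delta_{\tilde Q}$ by $-\ln\omega_k$ for \emph{every} node in the size band, whereas the paper only pays the price $-\ln(\theta\omega_{k+1})$ at nodes that actually carry a small divisor below $\theta\omega_k$; those are rare by the spacing lemma. To repair the proof you must group by the size of the divisor $\delta_{\tilde Q}$, not by the size of the multiindex, and you cannot avoid some version of Lemma~\ref{lemm-tech} and Proposition~\ref{estim-bruno}.
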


Let $\theta>0$ be such that 
\begin{equation}\label{theta}
4\theta:=\min_{1\leq j\leq n}\max_{1\leq i\leq l}|\mu_{i,j}|\leq 1.
\end{equation}
We can always assume this, even if this means using, for each $1\leq i\leq l$, the inverse of the  diffeomorphisms $F_i$~: Fix $j$, for each $i$, if $|\mu_{i,j}|>1$ then consider $\tilde F_i:=F_i^{-1}$. Then, at the end, we have $|\mu_{i,j}|\leq 1$ for all $i$.
If the ideal ${\cal I}$ is properly embedded, then we shall set 
$$
4\theta:=\min_{j\in{\cal S}}\max_{1\leq i\leq l}|\mu_{i,j}|\leq 1
$$ 
where ${\cal S}\neq \emptyset$ denotes the set of variables not involved in any generator. In particular, we have the property that if $x^Q\not\in {\cal I}$ then $x_sx^Q\not\in {\cal I}$ for all $s\in {\cal S}$. As the in previous case, this can always be achieved.

By definition,  $\eta_Q$ is a product of $1/\delta_{Q'}$ with $|Q'|\leq|Q|$. Let $k$ be a non-negative integer. Let us define $\phi^{(k)}(Q)$ (resp. $\phi^{(k)}_j(Q)$) to be the number of $1/\delta_{Q'}$'s present in this product and such that $0\neq \delta_{Q'}<\theta\omega_k(D,{\cal I})$ (resp. and $\delta_Q=\delta_{j, Q}$). The lemma is a consequence of the following proposition
\begin{prop}\cite{Stolo-dulac}[lemme 2.8]\label{estim-bruno}
For all $Q\in {\Bbb N}^n_2\setminus \hat{\cal I}$, we have $\phi^{(k)}(Q)\leq 2n\frac{|Q|}{2^k}$ if $|Q|\geq 2^k+1$; and $\phi^{(k)}(Q)=0$ if $|Q|\leq 2^k$.
\end{prop}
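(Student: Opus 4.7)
The plan is to split on $|Q|$ relative to $2^k$. For $|Q|\leq 2^k$, I would simply unfold the recursive definition of $\eta_Q$ to exhibit it as a product of finitely many factors $1/\delta_{Q'}$ with $Q'\in\Bbb N_2^n\setminus\hat{\cal I}$ and $2\leq |Q'|\leq |Q|\leq 2^k$. By the very definition of $\omega_k(D,{\cal I})$, every such nonzero $\delta_{Q'}$ then satisfies $\delta_{Q'}\geq \omega_k(D,{\cal I})>\theta\omega_k(D,{\cal I})$, using $\theta\leq 1/4$ from $(\ref{theta})$. Hence no factor is counted and $\phi^{(k)}(Q)=0$.

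For the regime $|Q|\geq 2^k+1$, I would refine the counting by fixing a coordinate $j$ and tracking $\phi^{(k)}_j(Q)$, the number of factors $1/\delta_{Q'}$ appearing in $\eta_Q$ for which $\delta_{Q'}=\delta_{j,Q'}<\theta\omega_k(D,{\cal I})$, and proving the sharper bound $\phi^{(k)}_j(Q)\leq 2|Q|/2^k$ by strong induction on $|Q|$. Summing over the $n$ coordinates then gives the stated $2n|Q|/2^k$. The inductive step feeds on the maximizing decomposition $\delta_Q\eta_Q=\eta_{Q_1}\cdots\eta_{Q_p}$: one distributes the count over the children $Q_\ell$ (whose sizes sum to at most $|Q|$), and $1/\delta_Q$ contributes at most a $+1$ to one of the $\phi^{(k)}_j(Q)$.

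The main obstacle, and the genuine content of the proposition, is an arithmetic separation lemma: if $Q^1,Q^2\in\Bbb N_2^n\setminus\hat{\cal I}$ both satisfy $0<\delta_{j,Q^\ell}<\theta\omega_k(D,{\cal I})$ with $|Q^1|<|Q^2|$ and both $|Q^\ell|\geq 2^k+1$, then $|Q^2|-|Q^1|\geq 2^k$. To establish it I would set $Q'':=Q^2-Q^1+e_j$ (after a minor case distinction to ensure nonnegative entries) and estimate, for every $1\leq i\leq l$,
$$
|\mu_i^{Q''}-\mu_{i,j}|=|\mu_{i,j}|\,|\mu_i^{Q^2-Q^1}-1|\leq \frac{|\mu_{i,j}|}{|\mu_i^{Q^1}|}\bigl|\mu_i^{Q^2}-\mu_i^{Q^1}\bigr|\leq \frac{|\mu_{i,j}|}{|\mu_i^{Q^1}|}\,2\theta\omega_k(D,{\cal I}).
$$
Choosing the coordinate $j$ so that $(\ref{theta})$ controls $|\mu_{i,j}|$ from both sides (and, in the properly embedded case, choosing $j\in{\cal S}$ precisely so that $x^{Q''}\notin{\cal I}$), one deduces $\max_i|\mu_i^{Q''}-\mu_{i,j}|<\omega_k(D,{\cal I})$. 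If $|Q''|\leq 2^k$ this would contradict the infimum defining $\omega_k(D,{\cal I})$; hence $|Q''|>2^k$, i.e.\ $|Q^2|-|Q^1|\geq 2^k$. With the separation in hand, a standard combinatorial bookkeeping on the tree of factors of $\eta_Q$ (exactly as in \cite{Stolo-dulac}, Lemme 2.8) bounds any chain of small factors along the tree by $|Q|/2^k$ and upgrades this to the tree bound $2|Q|/2^k$ per coordinate, closing the induction and yielding $\phi^{(k)}(Q)\leq 2n|Q|/2^k$.
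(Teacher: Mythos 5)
Your proposal follows essentially the same route as the paper: the paper reduces Proposition~\ref{estim-bruno} to \cite{Stolo-dulac}[Lemme 2.8], with the only new ingredient being Lemma~\ref{lemm-tech}, which is precisely the contrapositive of your ``arithmetic separation lemma,'' and your sketch of it (telescoping $|\mu_i^{Q^2}-\mu_i^{Q^1}|$ by the triangle inequality, building a probe multiindex of length $\leq 2^k$, and contradicting the infimum defining $\omega_k$) is the same computation the paper carries out. One small slip to be aware of: in your probe $Q''=Q^2-Q^1+e_j$ the index $j$ is \emph{not} free --- it is the coordinate already pinned down by the small-divisor condition $\delta_{Q^\ell}=|\delta_{j,Q^\ell}|$ --- whereas the coordinate used for the shift and the target eigenvalue must be chosen separately (the paper takes $E_a$ with $a\in{\cal S}$, which guarantees both $x^{P+E_a}\notin{\cal I}$ and, after minimizing $\max_i|\mu_{i,a}|$, the final contradiction with $(\ref{theta})$); your parenthetical ``choosing $j\in{\cal S}$'' conflates these two roles, but decoupling them makes your sketch match the paper's Lemma~\ref{lemm-tech} exactly.
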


In fact, $\phi^{(k)}(Q)$ bounds the number of $1/\delta_{Q'}$'s appearing in the product defining $\eta_Q$ and such that $\theta\omega_{k+1}(D, {\cal I})\leq \delta_{Q'}<\theta\omega_k(D, {\cal I})$.
\begin{proof}[Proof of lemma \ref{lemm-divis}]
Let $r$ be the integer such that $2^r+1\leq |Q|<2^{r+1}+1$. Then we have
$$
\eta_Q\leq \prod_{k=0}^r{\left(\frac{1}{\theta\omega_{k+1}(D, {\cal I})}\right)^{\phi^{(k)}(Q)}}.
$$
By applying the Logarithm and proposition \ref{estim-bruno}, we obtain
\begin{eqnarray*}
\ln \eta_Q &\leq &\sum_{k=0}^l{2n\frac{|Q|}{2^k}\left(\ln \frac{1}{\theta\omega_{k+1}(D,{\cal I})}\right)}\\
& \leq & |Q|\left(-2n\sum_{k\geq 0}{\frac{\ln \omega_{k+1}(D,{\cal I})}{2^k}}+2n\ln \theta^{-1}\sum_{k\geq 0}{\frac{1}{2^k}}\right).
\end{eqnarray*}
Since the family $D$ is diophantine, we obtain 
$\eta_Q \leq  c^{|Q|}$ for some positive constant $c$.
\end{proof}

For any positive integer $k$, for any $1\leq j\leq n$, let us consider the function defined on ${\Bbb N}^n_2\setminus \hat{\cal I}$ to be
$$
\forall Q\in \Bbb N_2^n\setminus \hat{\cal I},\;\;\;
\psi^{(k)}_j(Q)=\left \{ \begin{array}{l} 1 \;\;\;\;\mbox{ if }\; \delta_Q=|\delta_{Q,j}|\neq 0 \;\mbox{ and }\; |\delta_{Q,j}|<\theta\omega_k(D, {\cal I})\\
0 \;\;\;\;\mbox{ if }\; \delta_Q=0 \;\mbox{ or }\;\delta_Q\neq |\delta_{Q,j}|\;\mbox{ or }\; |\delta_{Q,j}|\geq \theta\omega_k(D, {\cal I})\\ \end{array} \right .
$$
Then we have, 
$$
0\leq \phi_j^{(k)}(Q)  = \psi^{(k)}_j(Q)+\max_{{\substack{ Q_j\in \Bbb N^n_1,S\in \Bbb N^n  \\
Q_1+\cdots+Q_p+S=Q  }}}{\left(\phi_j^{(k)}(Q_1)+\cdots+\phi_j^{(k)}(Q_p)\right)}.
$$
The proof of proposition \ref{estim-bruno} identical to the proof of \cite{Stolo-dulac}[lemme 2.8] except that we have to use the following version of \cite{Stolo-dulac}[lemme 2.7]~:
\begin{lemms}\label{lemm-tech}
Let $Q\in {\Bbb N}^n_2\setminus \hat{\cal I}$ be such that $\psi^{(k)}_j(Q)=1$.
If $Q=P+P'$ with $(P,P')\in \Bbb N^n_1\times\Bbb N^n_2$ and $|P|\leq 2^k-1$, 
then $(P,P')\in \Bbb N^n_1\setminus \hat{\cal I}\times\Bbb N^n_2\setminus \hat{\cal I}$ and $\psi^{(k)}_j(P')=0$.
\end{lemms}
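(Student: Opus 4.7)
My plan is to address the two conclusions of the lemma separately.

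The inclusion $(P,P')\in(\Bbb N^n_1\setminus\hat{\cal I})\times(\Bbb N^n_2\setminus\hat{\cal I})$ is immediate from the monomial structure of $\hat{\cal I}$. Since $x^Q=x^P\cdot x^{P'}\notin\hat{\cal I}$ and $\hat{\cal I}$ is a monomial ideal, neither $x^P$ nor $x^{P'}$ can lie in $\hat{\cal I}$; otherwise multiplication by the other factor would yield $x^Q\in\hat{\cal I}$, contradicting the hypothesis.

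For $\psi^{(k)}_j(P')=0$ I would argue by contradiction. Assume $\psi^{(k)}_j(P')=1$, so that $|\delta^i_{P',j}|\leq|\delta_{P',j}|<\theta\omega_k(D,{\cal I})$ for every $i$, while the hypothesis $\psi^{(k)}_j(Q)=1$ similarly gives $|\delta^i_{Q,j}|<\theta\omega_k(D,{\cal I})$ for every $i$. The multiplicative identity $\mu_i^Q=\mu_i^P\mu_i^{P'}$ then yields the algebraic relation
$$
\delta^i_{Q,j}=\mu_i^P\,\delta^i_{P',j}+\mu_{i,j}(\mu_i^P-1),
$$
and the last term is exactly $\delta^i_{P+e_j,j}$ (writing $e_j$ for the $j$-th unit multi-index). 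This produces the key upper bound
$$
|\delta^i_{P+e_j,j}|<\theta\omega_k(D,{\cal I})\bigl(1+|\mu_i^P|\bigr)
$$
valid for every $i$.

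The concluding step exploits $|P+e_j|=|P|+1\leq 2^k$. In the properly embedded case, I would replace $e_j$ by $e_s$ for some $s\in{\cal S}$, so that $x^{P+e_s}\notin{\cal I}$ automatically, and run the analogous identity with $s$ in place of $j$. The definition of $\omega_k(D,{\cal I})$ then forces $\max_i|\delta^i_{P+e_s,s}|\geq\omega_k(D,{\cal I})$ whenever these divisors do not all vanish, and comparing this lower bound with the upper bound above, via the normalization $4\theta=\min_{s\in{\cal S}}\max_i|\mu_{i,s}|\leq 1$ of $\theta$, produces the desired contradiction. The main obstacle is the degenerate resonant case where $\mu_i^P=1$ for every $i$: here the identity collapses to $\delta^i_{Q,j}=\delta^i_{P',j}$, so both small divisors are genuinely equal. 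This case must be excluded separately, using $x^P\notin\hat{\cal I}$ together with the structure of the invariant ring, mirroring the argument of \cite{Stolo-dulac}[Lemme 2.7] suitably adapted to the ideal setting.
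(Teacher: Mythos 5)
Your overall strategy coincides with the paper's: prove the first conclusion directly from the fact that $\hat{\cal I}$ is a monomial ideal, and prove the second by contradiction, exploiting the multiplicative relation $\mu_i^Q=\mu_i^P\mu_i^{P'}$ to produce a small divisor attached to $P+E_s$ ($s\in{\cal S}$), a multi-index of length $\leq 2^k$ whose monomial lies outside ${\cal I}$, and so contradicting the defining infimum of $\omega_k(D,{\cal I})$ via the normalization $(\ref{theta})$. Your handling of the first conclusion and your algebraic identity $\delta^i_{Q,j}=\mu_i^P\delta^i_{P',j}+\delta^i_{P+e_j,j}$ are both correct and equivalent to the paper's factorization $\mu_i^Q-\mu_i^{P'}=\mu_i^{P'}(\mu_i^P-1)$.

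Two points in the second half need repair, however. First, the step ``run the analogous identity with $s$ in place of $j$'' does not make sense as written: replacing $j$ by $s$ in your identity would produce $\delta^i_{Q,s}$ and $\delta^i_{P',s}$, quantities on which the hypotheses $\psi^{(k)}_j(Q)=1$, $\psi^{(k)}_j(P')=1$ say nothing. What you actually need, and what the paper does, is to \emph{first} isolate $|\mu_i^P-1|$ from the $j$-estimate — the paper writes $|\mu_i^Q-\mu_i^{P'}|=|\mu_i^{P'}|\,|\mu_i^P-1|<2\theta\omega_k$ and then bounds $|\mu_i^{P'}|$ \emph{below} by $2\theta$ using $|\mu_i^{P'}|>|\mu_{i,j}|-\theta\omega_k$ together with $(\ref{theta})$ — and only \emph{then} multiply by $\mu_{i,s}$ to form $\delta^i_{P+E_s,s}=\mu_{i,s}(\mu_i^P-1)$. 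Your bound $|\delta^i_{P+e_j,j}|<\theta\omega_k(1+|\mu_i^P|)$ goes the wrong way: it carries the uncontrolled factor $(1+|\mu_i^P|)$, and converting it into a bound on $\delta^i_{P+E_s,s}$ would require dividing by $|\mu_{i,j}|$, which is not bounded below uniformly in $i$. The paper's lower-bound route avoids both obstructions and is the one you should adopt.

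Second, on the degenerate resonant case $\mu_i^P=1$ for all $i$: you are right that the argument collapses there, and you are right that the paper's proof also passes over it silently (the step ``for a well chosen $i(a)$, $\max_j|\mu_j^{P+E_a}-\mu_{j,a}|\geq\omega_k(D,{\cal I})$'' tacitly assumes this max is nonzero). Flagging the issue is good, but deferring it to ``the structure of the invariant ring'' without saying how it is resolved leaves a genuine gap in your proof, the same one the paper has. To close it you would need to observe that $\mu_i^P=1$ for all $i$ means $x^P$ is a monomial in ${\cal C}_D$'s invariant ring (or, if $|P|=1$, that some coordinate is fixed by every $D_i$), and argue that this is excluded by the choice of ${\cal I}$ in the situations where the lemma is invoked.
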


\begin{proof}  Clearly, if $Q=P+P'\in {\Bbb N}^n_2\setminus \hat{\cal I}$ then $(P,P')\in \Bbb N^n_1\setminus \hat{\cal I}\times\Bbb N^n_2\setminus \hat{\cal I}$.
There are two cases to consider :
\ben
\item if $\delta_{P'}\neq |\delta_{j,P'}|$ or $\delta_{P'}=0$ then $\psi^{(k)}_j(P')=0$, by definition.
\item if $\delta_{P'}=|\delta_{j,P'}|\neq 0$, assume
that $\delta_{P'}<\theta\omega_k(D, {\cal I})$. Then, for all $1\leq i\leq l$, we have
$$
|\mu_i^{P'}|>|\mu_{i,j}|-\theta\omega_k(D, {\cal I})\geq 4\theta -2\theta= 2\theta.
$$
It follows that, for all $1\leq i\leq l$,
\begin{eqnarray*}
2\theta\omega_k(D, {\cal I}) & > & |\mu_i^Q-\mu_{i,j}|+|\mu_i^{P'}-\mu_{i,j}|\\
& > & |\mu_i^Q-\mu_i^{P'}| = |\mu_i^{P'}||\mu_i^{P}-1|.
\end{eqnarray*}
If ${\cal I}$ is properly embedded, for all $a\in {\cal S}$, we have $x_ax^P\not\in {\cal I}$. 
Therefore,  for all $1\leq i\leq l$ and all $a\in {\cal S}$, we have
$$
2\theta\omega_k(D, {\cal I})  >  2\theta|\mu_{i,a}|^{-1}|\mu_i^{P+E_a}-\mu_{i,a}|\\
$$
So, for a well chosen $i=i(a)$, we have $|\mu_i^{P+E_a}-\mu_{i,a}|=\max_j|\mu_j^{P+E_a}-\mu_{j,a}|\geq \omega_k(D, {\cal I})$. So, for that $i(a)$, we have
$$
2\theta\omega_k(D, {\cal I})>  2\theta|\mu_{i(a),a}|^{-1}\omega_k(D, {\cal I}).
$$
This contradicts the facts that $\min_{a\in {\cal S}}\max_{1\leq i\leq l}|\mu_{i,a}|\leq 1$. 
If ${\cal I}$ is not properly embedded, then for each $1\leq a\leq n$ there exists $1\leq i(a)\leq l$ 
$$
2\theta\omega_k(D) > 2\theta|\mu_{i(a),a}|^{-1}\omega_k(D).
$$
Hence, we have $|\mu_{i(a),a}|>1$ which contradicts $(\ref{theta})$.
\een
Hence, we have shown that $\psi^{(k)}_j(P')=0$. 
\end{proof}
\section{Family of totally real $n$-manifolds in $(\Bbb C^n,0)$} 

Let us consider a family $M:=\{M_i\}_{i=1,\ldots, m}$ of real analytic totally real $n$-submanifolds of $\Bbb C^n$ passing through the origin. Locally, each $M_i$ is the fixed point set of an anti-holomorphic involution $\rho_i$~: $M_i = FP(\rho_i)$ and $\rho_i\circ \rho_i=Id$. This means that
$$
\rho_i(z) := B_i\bar z + R_i(\bar z)
$$
where $R_i$ is a germ of holomorphic map at the origin with $R_i(0)=0$ and $DR_i(0)=0$. Each matrix $B_i$ is invertible and satisfies $B_i\bar B_i=Id$.
The tangent space, at the origin, of $M_i$ is the totally real $n$-plane
$$
\{z=B_i\bar z\}
$$
We assume that these are all distinct one from another.
Their intersection at the origin is the set
$$
\left\{z\in \Bbb C^n\;|\;B_i\bar z=z,\;i=1,\ldots,m\right\}\subset \left\{z\in \Bbb C^n\;|\;B_i\bar B_j z=z,\;i,j=1,\ldots,m\right\}.
$$
It is contained in the common eigenspace of the $B_i\bar B_j$'s associated to the eigenvalue $1$. {\bf We shall not assume that this space is reduced to $0$}.

Let us consider the group $G$ generated by the germs of holomorphic diffeomorphisms of $(\Bbb C^n,0)$
$F_{i,j}:=\rho_i\circ \rho_j$, $1\leq i,j\leq m$. Let $D_{i,j}:=B_i\bar B_j$ be the linear part at the origin of $F_{i,j}$. Let us set 
$$
F_{i,j}:=D_{i,j}z+f_{i,j}(z)
$$
where $f_{i,j}$ is a germ of holomorphic function at the origin with $f_{i,j}(0)=0$ and $Df_{i,j}(0)=0$.
\begin{lemms}
We have, for any $1\leq i,j\leq m$,
\begin{equation}
R_i(\bar z) - D_{i,j}R_i(\bar F_{i,j}) =  D_{i,j}B_i\bar f_{i,j}(\bar z)+f_{i,j}(\rho_j)\label{conj}.
\end{equation}
\end{lemms}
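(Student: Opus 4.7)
The plan is to exploit the involution identities $\rho_i^2 = \rho_j^2 = Id$, which allow us to rewrite the defining relation $F_{i,j} = \rho_i \circ \rho_j$ in two equivalent forms:
$$
\rho_i = F_{i,j}\circ \rho_j \qquad \text{and} \qquad \rho_j = \rho_i\circ F_{i,j}.
$$
Each form yields a functional identity among $R_i$, $R_j$ and $f_{i,j}$; combining the two so as to eliminate $R_j$ should give the claim.

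First I would substitute the explicit expansions $\rho_k(z) = B_k\bar z + R_k(\bar z)$ and $F_{i,j}(w) = D_{i,j}w + f_{i,j}(w)$ into $\rho_i = F_{i,j}\circ \rho_j$:
$$
B_i\bar z + R_i(\bar z) \;=\; D_{i,j}\bigl(B_j\bar z + R_j(\bar z)\bigr) + f_{i,j}(\rho_j(z)).
$$
The linear terms cancel because $D_{i,j}B_j = B_i\bar B_j B_j = B_i$, and one reads off
$$
R_i(\bar z) \;=\; D_{i,j}R_j(\bar z) + f_{i,j}(\rho_j(z)). \qquad (*)
$$

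Next I would expand $\rho_j = \rho_i\circ F_{i,j}$. Writing $\overline{F_{i,j}(z)} = \bar D_{i,j}\bar z + \bar f_{i,j}(\bar z)$ and using $B_i\bar D_{i,j} = B_i\bar B_i B_j = B_j$, the linear parts cancel and one obtains
$$
R_j(\bar z) \;=\; B_i\bar f_{i,j}(\bar z) + R_i\bigl(\overline{F_{i,j}(z)}\bigr). \qquad (**)
$$
Substituting $(**)$ into $(*)$ and moving the $R_i$ term to the left produces exactly the asserted identity.

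The only mildly delicate point is the bookkeeping of complex conjugates in passing from $F_{i,j}$ to $\bar F_{i,j}$, and recognizing that both linear-part cancellations reduce to the single defining relation $B_k\bar B_k = Id$ of the anti-holomorphic involutions. Beyond that, the derivation is a direct consequence of $\rho_i\circ\rho_i = Id$.
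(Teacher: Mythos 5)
Your proof is correct, and it reaches the identity by a route that is somewhat cleaner than the one in the paper. You rearrange the defining relation $F_{i,j}=\rho_i\circ\rho_j$ into the pair $\rho_i = F_{i,j}\circ\rho_j$ and $\rho_j = \rho_i\circ F_{i,j}$, read off the nonlinear part of each after the linear terms cancel, and then eliminate $R_j$ by one substitution. The paper instead extracts the nonlinear parts of $F_{i,j}=\rho_i\circ\rho_j$ and of $\rho_i\circ\rho_i=Id$ as its two starting identities, and then carries out a longer chain of manipulations: multiplying by $\bar B_i$, substituting into the involution identity for $\rho_j$, multiplying by $\bar B_i$ again, conjugating, and finally evaluating the identity coming from $\rho_i\circ\rho_i=Id$ along $\rho_j$ to produce the term $R_i(\bar F_{i,j})$. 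Both arguments ultimately rest on the same two group-theoretic relations together with the matrix identity $B_k\bar B_k = Id$, which is also exactly what underlies the linear-part cancellations $D_{i,j}B_j = B_i$ and $B_i\bar D_{i,j} = B_j$ that you invoke. The advantage of your version is that it builds the involution identity directly into the shape of the two starting equations, so that $R_j$ drops out by a single substitution and the intermediate conjugation steps of the paper's derivation are avoided.
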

\begin{proof}
Let us write the relation $F_{i,j}=\rho_i\circ \rho_j$ and $\rho_i\circ \rho_i=Id$. We obtain
\begin{eqnarray}
f_{i,j}(z) & = & B_i\bar R_j(z) + R_i(\bar \rho_j)\label{equ1}\\
0 & = & B_i\bar R_i(z) + R_i(\bar \rho_i).\label{equ2}
\end{eqnarray}
By multiplying the first equation by $\bar B_i$, we obtain
$$
\bar R_j(z) = \bar B_i f_{i,j}(z)- \bar B_i R_i(\bar \rho_j).
$$
Hence,we have
$$
0  =   B_j\bar B_i f_{i,j}(z)- B_j\bar B_i R_i(\bar \rho_j)+  B_i \bar f_{i,j}(\bar \rho_j)-  B_i \bar R_i(\rho_j\circ\rho_j).
$$
Let us multiply by $\bar B_i$ on the left and take the conjugation. We obtain
$$
0= D_{i,j}B_i\bar f_{i,j}(\bar z)-D_{i,j}B_i \bar R_i(\rho_j)+ f_{i,j}(\rho_j)-R_i(\bar z).
$$
On the other hand, by evaluating equation $(\ref{equ2})$ at $\bar \rho_j$, we obtain
$$
0=B_i\bar R_i(\bar \rho_j) + R_i(\bar F_{i,j}).
$$
At the end,we obtain
$$
R_i(\bar z) - D_{i,j}R_i(\bar F_{i,j}) =  D_{i,j}B_i\bar f_{i,j}(\bar z)+f_{i,j}(\rho_j).
$$
\end{proof}
\begin{defis}
The $\rho_i$'s are simultaneously normalizable whenever $R_i(\bar z) - D_{i,j}R_i(\bar D_{i,j}\bar z)=0$ for all $1\leq i,j\leq l$.
\end{defis}
\begin{rems}\label{rem-normalisation}
If the group $G$ is holomorphically linearizable at the origin then the $\rho_i$'s are simultaneously normalizable. This follows from $(\ref{conj})$ with $f_{i,j}\equiv 0$ for all $i,j$.

Moreover, assume the $D_{i,j}$'s are simultaneously diagonalizable and let us set $D_{i,j}=\text{diag}(\mu_{i,j,k})$. Then, for any $1\leq k\leq n$ and any $1\leq j\leq m$, the $k$-component $\rho_{i,k}$ of $\rho_i$ can be written as
$$
\left(\rho_{i}(z)-B_i\bar z\right)_k = \sum_{\substack{Q\in \Bbb N^n_2\\ \forall j,\;\bar\mu_{i,j}^Q=\mu_{i,j,k}^{-1}}} \rho_{i,k,Q}\bar z^Q.
$$
Here, $(f)_k$ denotes the k-th-component of $f$.
\end{rems}
As a consequence, we have
\begin{theos}
Let us assume that the group $G$ associated to the family of totally real
submanifolds $M$ is a semi-simple Lie group. Then the $\rho_i$'s are
simultaneously and holomorphically normalizable in a neighborhood of the origin.
\end{theos}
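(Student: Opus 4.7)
The plan is to deduce the theorem from Remark \ref{rem-normalisation} after establishing the holomorphic linearizability of the group $G$ at the origin. If a holomorphic diffeomorphism $\Phi$ of $(\Bbb C^n,0)$, tangent to the identity, satisfies $\Phi^{-1}\circ F_{i,j}\circ \Phi = D_{i,j}$ for all $1\leq i,j\leq m$, then in the new coordinates all the $f_{i,j}$'s vanish and equation $(\ref{conj})$ reduces to $R_i(\bar z) - D_{i,j}R_i(\bar D_{i,j}\bar z) = 0$. This is exactly the condition for the $\rho_i$'s to be simultaneously normalizable; since $\Phi$ is holomorphic, the normalization is then holomorphic as required.

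It therefore remains to show that a semi-simple Lie group $G$ of germs of holomorphic diffeomorphisms of $(\Bbb C^n,0)$ fixing the origin is holomorphically linearizable near $0$. I would proceed in two steps. First, pick a maximal compact subgroup $K\subseteq G$ and apply the Bochner--H.~Cartan averaging procedure: the holomorphic germ
$$
\Phi(z) := \int_K D(k)^{-1}\cdot k(z)\, dk,
$$
with $dk$ the normalized Haar measure on $K$ and $D(k)$ the linear part of $k$, is well-defined on a common polydisc (by the compactness of $K$ and the resulting uniform bounds on $\|D(k)^{\pm 1}\|$ and on the domains of definition), holomorphic, and tangent to the identity at $0$. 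A standard change of variable in the integral gives $k\circ \Phi = \Phi\circ D(k)$ for every $k\in K$, so that $K$ acts linearly in the coordinates furnished by $\Phi$. Second, one extends linearity from $K$ to the whole of $G$: because $G$ is real semi-simple, its maximal compact subgroup $K$ is Zariski-dense in the complexification $G^{\Bbb C}$, and via the faithful linear representation $g\mapsto D(g)$ one sees that the locus $\{g\in G:\Phi^{-1}\circ g\circ\Phi \text{ is linear}\}$ is Zariski-closed in $G$. Containing $K$, it must equal $G$.

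The main obstacle is precisely the second step, namely making the Zariski-density / Zariski-closure argument rigorous at the level of germs of biholomorphisms, and ensuring that a single polydisc supports the linearization of every element of $G$ rather than only of $K$. Both difficulties are handled by truncating: the condition ``$\Phi$ linearizes $g$ up to order $N$'' is a finite system of polynomial equations in the (finitely many) Taylor coefficients of $g$ up to order $N$, so it defines a Zariski-closed subgroup of $G$; taking intersections over all $N$ and passing through the Cartan/Iwasawa decomposition of $G$ yields full linearity. The uniform domain of definition for elements of $G$ is then automatic, since $\Phi$ itself has already been produced by the averaging formula and depends only on $K$. Combined with Remark \ref{rem-normalisation}, this produces the desired simultaneous holomorphic normalization of the $\rho_i$.
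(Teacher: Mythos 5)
Your reduction step is exactly the paper's: once $G$ is holomorphically linearized, equation $(\ref{conj})$ with $f_{i,j}\equiv 0$ gives $R_i(\bar z)-D_{i,j}R_i(\bar D_{i,j}\bar z)=0$, which is precisely the definition of simultaneous normalizability; this is Remark~\ref{rem-normalisation}, and the paper invokes it verbatim. Where you diverge is that the paper does not prove holomorphic linearizability of a semi-simple $G$ at all --- it cites it as classical (Kushnirenko, Guillemin--Sternberg, Cairns--Ghys) --- whereas you attempt a proof by Bochner averaging over a maximal compact $K$ followed by a unitarian-trick Zariski-density argument. That route is legitimate in outline, but, as you acknowledge, the second step is the delicate one: $G$ here is a Lie group of germs of biholomorphisms, not a priori an algebraic group, so ``Zariski-closed subgroup of $G$'' only makes sense after projecting to the algebraic jet groups $J_N$; one then needs that the Zariski closure of $j_N(K)$ in $J_N$ contains $j_N(G)$, which follows from $\mathfrak k\otimes\Bbb C=\mathfrak g\otimes\Bbb C$ at the Lie-algebra level together with connectedness of $G$ (an implicit hypothesis in your argument). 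The classical proofs the paper cites instead establish formal linearization by Lie-algebra cohomology (Whitehead's lemma kills the order-by-order obstruction cocycles) and then obtain convergence from the compact part. Both approaches are viable; yours is somewhat more elementary in spirit, but as written it leaves the jet-group formalism, the faithfulness/connectedness issues, and the precise density statement as acknowledged gaps, so it stands as a sketch rather than a complete replacement for the citation.
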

\begin{proof}
It is classical \cite{Kushnirenko, stern-semi-simple, Ghys} that if the Lie
group $G$ of germs of diffeomorphisms at a common fixed point is semi-simple then it is holomorphically linearizable in a neighborhood of the origin. Then, apply the previous remark \ref{rem-normalisation}.
\end{proof}
\begin{defis}
We shall say that such a family $M=\{M_i\}_{i=1,\ldots, m}$ of totally real $n$-submanifold of $(\Bbb C^n,0)$ intersecting at the origin is {\bf commutative} if the group $G$ is abelian.
\end{defis}
From now on, we shall assume that {\bf $M$ is commutative} and that the {\bf family $D$}
of linear part of the group $G$ at the origin {\bf is diagonal}. In other words, $D_{i,j}=\text{diag}(\mu_{i,j,k})$.
Let ${\cal I}$ be a monomial ideal of ${\cal O}_n$. It is generated by some
monomials $x^{R_1},\ldots, x^{R_p}$. We shall denote $\bar{\cal I}$ the ideal
of $\Bbb C[[\bar x_1,\ldots,\bar x_n]]$ generated by $\bar x^{R_1},\ldots,
\bar x^{R_p}$.
\begin{defis}
\begin{enumerate}
\item We shall say that the family $M$ of manifolds is {\bf non-resonant} whenever, for all $1\leq i\leq m$,  $1\leq k\leq n$ and for all $Q\in \Bbb N_2^n$, there exists a $1\leq j\leq m$ such that $\bar\mu_{i,j}^Q\neq \mu_{i,j,k}^{-1}$.
\item We shall say that the family $M$ of manifolds {\bf non-resonant on ${\cal I}$} whenever for all monomial $z^Q$ not belonging to ${\cal I}$ and for all couple $(i,k)$, there exists $j$ such that $\bar\mu_{i,j}^Q\neq \mu_{i,j,k}^{-1}$.
%\item The $\rho_i$'s are simultaneously {\bf normalizable on $V({\cal I})$} whenever $R_i(\bar z) - D_{i,j}R_i(\bar %D_{i,j}\bar z)\in {\cal I}$ for all $1\leq i,j\leq l$. In this case, we have
%$$
%\rho_{i,k}(z)=B_i\bar z + \sum_{\substack{Q\in \Bbb N^n_2, z^Q\not\in {\cal I}\\ \bar\mu_{i,j}^Q=\mu_{i,j,k}^{-1}}} %\rho_{i,k,Q}\bar z^Q.
%$$
\end{enumerate}
\end{defis}
\begin{theos}\label{theo-main}
Assume that the group $G$ is abelian. Let ${\cal I}$ be
a monomial ideal (resp. properly embedded) left invariant by the family $D:=\{D_{i,j}\}$ and the involutions $z\mapsto B_i\bar z$. Assume that $D$ is diophantine (resp. on ${\cal I}$) and that $M$ is non-resonant on ${\cal I}$. Assume $G$ is formally linearizable on ${\cal I}$.
Then, the family $F$ is holomorphically linearizable on ${\cal I}$. Moreover, in these coordinates, the $\rho_i$'s are linear and anti-holomorphic on $\bar{\cal I}$.
\end{theos}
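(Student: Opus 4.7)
The plan is to reduce the statement to Theorem \ref{theo-invariant} applied to the group $G$, and then to exploit the functional equation (\ref{conj}) to promote the linearization of $G$ into a normalization of the involutions $\rho_i$.

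First, I would apply Theorem \ref{theo-invariant} to the abelian family $\{F_{i,j}\}$ with linear parts $\{D_{i,j}\}$: by hypothesis $D$ is diophantine on ${\cal I}$ and $G$ is formally linearizable on ${\cal I}$, so there exists a germ of holomorphic diffeomorphism $\Phi$, tangent to the identity, such that each
$G_{i,j}:=\Phi^{-1}\circ F_{i,j}\circ\Phi = D_{i,j}w+g_{i,j}(w)$
with $g_{i,j}\in({\cal I})^n$. This already yields the first conclusion of the theorem. Set $\tilde\rho_i:=\Phi^{-1}\circ\rho_i\circ\Phi$; since $\Phi$ is tangent to the identity, $\tilde\rho_i$ is an antiholomorphic involution of the form $\tilde\rho_i(w)=B_i\bar w+\tilde R_i(\bar w)$, and the identity $\tilde\rho_i\circ\tilde\rho_j=G_{i,j}$ holds. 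Apply the lemma giving equation (\ref{conj}) in these new coordinates:
\begin{equation}\label{conj-new}
\tilde R_i(\bar w)-D_{i,j}\tilde R_i(\bar G_{i,j})=D_{i,j}B_i\bar g_{i,j}(\bar w)+g_{i,j}(\tilde\rho_j).
\end{equation}

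The core of the proof is an induction on $|Q|$ showing that, for every $1\le i\le m$, every $1\le m\le n$ and every $Q\in\Bbb N_2^n$ with $\bar w^Q\notin\bar{\cal I}$, one has $\tilde R_{i,m,Q}=0$. Fix such $Q$ and assume the claim holds for all $i'$ and all $|Q'|<|Q|$. Taylor-expanding $\tilde R_i$ at $\bar D_{i,j}\bar w$ one sees that the difference $\tilde R_i(\bar G_{i,j})-\tilde R_i(\bar D_{i,j}\bar w)$ has all its terms multiplied by components of $\bar g_{i,j}\in(\bar{\cal I})^n$, hence lies in $(\bar{\cal I})^n$ and contributes nothing at the monomial $\bar w^Q$. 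Thus the coefficient of $\bar w^Q$ in the $m$-th component of the left-hand side of (\ref{conj-new}) reduces to $(1-\mu_{i,j,m}\bar\mu_{i,j}^Q)\,\tilde R_{i,m,Q}$.

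For the right-hand side of (\ref{conj-new}), the first term $D_{i,j}B_i\bar g_{i,j}(\bar w)$ clearly lies in $(\bar{\cal I})^n$ and so contributes $0$ to the coefficient of $\bar w^Q\notin\bar{\cal I}$. The delicate term is $g_{i,j}(\tilde\rho_j)$, whose $m$-th component reads $\sum_{z^P\in{\cal I}}g_{i,j,m,P}\,\prod_l(B_j\bar w+\tilde R_j(\bar w))_l^{p_l}$. Expanding this product, a pure linear contribution $(B_j\bar w)^P$ with $|P|=|Q|$ lies in $\bar{\cal I}$ thanks to the assumed invariance of ${\cal I}$ under the linear involutions $z\mapsto B_j\bar z$, so it does not contribute at $\bar w^Q$. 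Every mixed contribution carries at least one factor $\tilde R_{j,l,S_i}$ with $|S_i|\le|Q|-|P|+|\alpha|-2(|\alpha|-1)\le|Q|-1$ (because $|P|\ge 2$ and $|\alpha|\ge 1$); by the inductive hypothesis, a nonzero factor forces $\bar w^{S_i}\in\bar{\cal I}$, but then the resulting monomial $\bar w^Q$ would automatically lie in $\bar{\cal I}$, contradicting our choice of $Q$. Hence every mixed contribution vanishes as well, so the whole right-hand side vanishes at $\bar w^Q$. We obtain $(1-\mu_{i,j,m}\bar\mu_{i,j}^Q)\,\tilde R_{i,m,Q}=0$ for every $j$, and the non-resonance of $M$ on ${\cal I}$ supplies a $j$ for which the prefactor is nonzero, forcing $\tilde R_{i,m,Q}=0$ and closing the induction.

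The bookkeeping in the mixed-term analysis of $g_{i,j}(\tilde\rho_j)$, in particular the fact that the induction only requires knowledge of the lower-order jets of the $\tilde R_{j'}$'s and that the invariance hypothesis exactly handles the purely linear contribution, is where I expect the main technical work to lie; the rest reduces to combining Theorem \ref{theo-invariant}, the identity (\ref{conj}) and the non-resonance condition in a straightforward manner.
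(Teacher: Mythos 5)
Your proposal is correct and follows essentially the same route as the paper: first invoke Theorem \ref{theo-invariant} to linearize $G$ on ${\cal I}$, then induct on $|Q|$ using the functional equation (\ref{conj}) (transported to the new coordinates) and the non-resonance hypothesis to kill the coefficients $\tilde R_{i,m,Q}$ with $\bar w^Q\notin\bar{\cal I}$. You are more explicit than the paper about conjugating the $\rho_i$'s by $\Phi$ and about the degree bookkeeping for the mixed terms of $g_{i,j}(\tilde\rho_j)$, but the decomposition of the error into the four pieces that vanish at $\bar w^Q\notin\bar{\cal I}$ (the $\bar g_{i,j}$ term, the purely linear term via $B_j$-invariance of ${\cal I}$, the Taylor remainder in $\tilde R_i$, and the Taylor remainder in $g_{i,j}$ handled by induction) is exactly the one the paper uses.
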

\begin{proof}
By theorem \ref{theo-invariant}, the family $F$ is holomorphically linearized on ${\cal I}$. Let us show that, in these coordinates, the $\rho_i$'s are anti-linearized on $\bar{\cal I}$.

Let us prove by induction on $|Q|\geq 2$ that $\{\rho_{i,k}\}_Q=0$ whenever
$z^Q$ doesn't belong to ${\cal I}$ and
$\bar\mu_{i,j}^Q\neq\mu_{i,j,k}^{-1}$. We recall that $\{\rho_{i,k}\}_Q$
denotes the coefficient of $\bar z^Q$ in the Taylor expansion of
$\rho_{i,k}$. Assume it is case up to order $k$. Let $Q\in \Bbb N_2^n$ with $|Q|=k+1$. Let us compute $\{\rho_{i,k}\}_Q$. Using equation $(\ref{conj})$, we obtain
\begin{eqnarray*}
R_{i}(\bar z) - D_{i,j}R_{i}(\bar D_{i,j}\bar z)& = & D_{i,j}B_i\bar f_{i,j}(\bar z)+f_{i,j}(B_j\bar z)\\
+D_{i,j}\left(R_{i}(\bar D_{i,j}\bar z) -R_{i}(\bar F_{i,j}\bar z)\right)& & +\left(f_{i,j}(\rho_j)-f_{i,j}(B_j\bar z)\right).
\end{eqnarray*}
Moreover, $F$ is linearized on $V({\cal I})$. Hence, both $\{D_{i,j}B_i\bar f_{i,j}(\bar z)+f_{i,j}(B_j\bar z)\}_Q$ and $\{R_{i}(\bar D_{i,j}\bar z) -R_{i}(\bar F_{i,j}\bar z)\}_Q$ vanish when $z^Q$ doesn't belong to ${\cal I}$. Hence, if $z^Q\not\in {\cal I}$, then we have
$$
(1-\mu_{i,j,k}\bar\mu^Q_{i,j})R_{Q,i,k}=\{\left(f_{i,j,k}(\rho_j)-f_{i,j,k}(B_j\bar z)\right\}_Q.
$$
But by induction, we have 
$$
\{\left(f_{i,j,k}(\rho_j)-f_{i,j,k}(B_j\bar z)\right\}_Q = \{Df_{i,j,k}(B_j\bar z)R_j+Df_{i,j,k}^2(B_j\bar z)R_j^2+\cdots\}_Q =0.
$$
Therefore, since $(1-\mu_{i,j,k}\bar\mu^Q_{i,j})\neq 0$, then we have $R_{Q,i,k}=0$. That is, 
$$
\rho_i(z) = B_i\bar z \mod \bar{\cal I}.
$$
\end{proof}
\begin{coros}
Under the assumptions of theorem \ref{theo-main}, there exists a complex analytic subvariety ${\cal S}$ passing through the origin and intersecting each totally real submanifold $M_i$. In good holomorphic coordinate system, ${\cal S}$ is a finite intersection of a finite union of complex hyperplane defined by complex coordinate subspaces :
$$
{\cal S}=\cap_i\cup_j\{z_{i_j}=0\}.
$$
The intersection $M_k\cap {\cal S}$ is then given by 
$$
M_k\cap {\cal S} = \left\{z\in \cap_i\cup_j\{z_{i_j}=0\}\,|\; B_k\bar z =z\right\}.
$$
\end{coros}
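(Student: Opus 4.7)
The plan is to take ${\cal S}$ to be the germ $V({\cal I})$ at the origin, and then to read off its geometric description and the form of $M_k\cap{\cal S}$ directly from the normal form supplied by Theorem~\ref{theo-main}.

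First, I would apply Theorem~\ref{theo-main} to obtain holomorphic coordinates near $0$ in which the family $F=\{F_{i,j}\}$ is linearized on ${\cal I}$ and each involution reads
$$
\rho_i(z)=B_i\bar z+R_i(\bar z),\qquad R_i\in(\bar{\cal I})^n.
$$
Set ${\cal S}:=V({\cal I})$ in these new coordinates; this is a germ of complex analytic set at the origin because ${\cal I}$ is a (monomial) ideal of ${\cal O}_n$.

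Next I would unpack the shape of ${\cal S}$. Since ${\cal I}$ is monomial, generated by $x^{R_1},\dots,x^{R_p}$, we have $V({\cal I})=\bigcap_{i=1}^{p}V(x^{R_i})$, and for each $i$, $V(x^{R_i})=\bigcup_{j}\{z_{i_j}=0\}$ as $i_j$ ranges over the variables occurring in the monomial $x^{R_i}$. This yields precisely the announced form ${\cal S}=\bigcap_i\bigcup_j\{z_{i_j}=0\}$, and in particular $0\in{\cal S}$, so ${\cal S}$ meets every $M_k$ at the origin.

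For the intersection formula, I would combine two observations. On the one hand, ${\cal I}$ is assumed invariant under the anti-linear involution $z\mapsto B_i\bar z$, and because the generators of ${\cal I}$ are monomials one has $V({\cal I})=V(\bar{\cal I})$ as real subsets of $({\Bbb C}^n,0)$; hence $B_i\bar z\in{\cal S}$ whenever $z\in{\cal S}$. On the other hand, $R_i\in(\bar{\cal I})^n$ vanishes identically on ${\cal S}$. Together these say that the restriction $\rho_i|_{\cal S}$ coincides with the anti-linear map $z\mapsto B_i\bar z$. Since $M_k=\mathrm{FP}(\rho_k)$, we conclude
$$
M_k\cap{\cal S}=\{z\in{\cal S}\mid\rho_k(z)=z\}=\{z\in{\cal S}\mid B_k\bar z=z\},
$$
which is the desired description.

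All the analytic content has already been concentrated in Theorem~\ref{theo-main}; the only point that deserves care here is the interplay between the compatibility hypothesis on ${\cal I}$ under $z\mapsto B_i\bar z$ and the fact that the remainder $R_i$ lives in $(\bar{\cal I})^n$, which is what allows both the stability of ${\cal S}$ under each $\rho_i$ and the collapse of the fixed-point equation to the linear relation $B_k\bar z=z$.
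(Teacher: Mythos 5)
Your proof is correct and follows the same route as the paper: take ${\cal S}=V({\cal I})$ in the normalized coordinates supplied by Theorem~\ref{theo-main}, read off its shape from the monomial generators, and identify $M_k\cap{\cal S}$ with the fixed-point set of $\rho_k$ restricted to ${\cal S}$. You have merely made explicit two points the paper leaves implicit, namely that compatibility of ${\cal I}$ with $z\mapsto B_i\bar z$ forces ${\cal S}$ to be stable under these involutions, and that $R_i\in(\bar{\cal I})^n$ vanishes identically on ${\cal S}$, so the fixed-point equation collapses to $B_k\bar z=z$.
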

\begin{proof}
The complex analytic subvariety ${\cal S}$ is nothing but $V({\cal I})$. The trace of it on $M_i$ is the fixed points set of $\rho_i$ belonging to $V({\cal I})$. 
%It is non void since it contains the origin. 
According to the previous theorem, the $\rho_i$'s are holomorphically and simultaneously linearizable on $V({\cal I})$. By assumptions, ${\cal I}$ is a monomial ideal so $V({\cal I})$ is a finite intersection of a finite union of hyperplane defined by coordinate subspaces :
$$
{\cal S}=\cap_i\cup_j\{z_{i_j}=0\}.
$$
 
\end{proof}
\begin{coros}
Assume that the family $M$ is non-resonant, $G$ is formally linearizable and $D$ is diophantine. Then, in a good holomorphic coordinates system, $M$ is composed of linear totally real subspaces
$$
\bigcup_i\left\{z\in\Bbb C^n\,|\; B_i\bar z = z\right\}.
$$
\end{coros}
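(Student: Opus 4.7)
The plan is to deduce this corollary as the special case of Theorem~\ref{theo-main} in which the monomial ideal is taken to be the zero ideal, ${\cal I}=(0)$. By the remark following the definitions, $(0)$ is properly embedded with ${\cal S}=\{x_1,\ldots,x_n\}$, and it is trivially left invariant by the linear family $D$ as well as by each anti-holomorphic involution $z\mapsto B_i\bar z$, since neither map changes the zero set.

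Next I would check that each hypothesis of Theorem~\ref{theo-main} collapses to the corresponding hypothesis of this corollary when ${\cal I}=(0)$. For the small-divisor condition this is immediate: no monomial $x^Q$ with $|Q|\geq 2$ lies in $(0)$, so the infimum defining $\omega_k(D,{\cal I})$ is taken over the same set as the one defining $\omega_k(D)$; hence the diophantine hypothesis of the corollary implies $(\omega(D,(0)))$. Similarly, the non-resonance of $M$ on ${\cal I}=(0)$ reads as the (global) non-resonance assumption, and ``formal linearizability of $G$ on $\widehat{(0)}$'' is just formal linearizability of $G$.

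Applying Theorem~\ref{theo-main} then yields a germ of biholomorphism $\Phi$ at the origin that simultaneously linearizes the family $F=\{\rho_i\circ\rho_j\}$ and in which each $\rho_i$ is anti-holomorphic linear modulo $\overline{(0)}=(0)$, i.e.\ $\rho_i(z)-B_i\bar z\in(\overline{(0)})^n=(0)^n$. Consequently, in the coordinates provided by $\Phi$, $\rho_i(z)=B_i\bar z$ identically for each $i$, and therefore the fixed-point set $M_i=FP(\rho_i)$ coincides with the linear totally real $n$-plane $\{z\in\Bbb C^n\,|\,B_i\bar z=z\}$. Taking the union over $i$ gives the claimed normal form.

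The main point to verify is simply the translation of the ``on ${\cal I}$''-hypotheses into their vanishing-ideal counterparts; there is no genuine obstacle, since the whole analytic content, namely the Bruno-type estimates on the small divisors and the majorant argument, is already packaged inside the proofs of Theorems~\ref{theo-invariant} and~\ref{theo-main}.
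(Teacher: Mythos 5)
Your proof is correct and is exactly the argument the paper intends (the corollary is left without an explicit proof, but your reduction is the natural and only one): specialize Theorem~\ref{theo-main} to ${\cal I}=(0)$, note that $\omega_k(D,(0))=\omega_k(D)$, that ``non-resonant on $(0)$'' coincides with ``non-resonant,'' and that ``formally linearizable on $(0)$'' is ordinary formal linearizability, and conclude $\rho_i(z)=B_i\bar z$ so $M_i=\{B_i\bar z=z\}$. No gaps.
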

\begin{rems}
If the family $M$ is non-resonant and if for all $(i,k)$, one of the eigenvalues $\mu_{i,j,k}$'s belong to the unit circle, then $G$ is formally linearizable. In fact, for any $Q\in \Bbb N_2^n$, any $1\leq i\leq m$, any $1\leq k\leq n$, there exists $1\leq j\leq m$ such that 
$$
\bar\mu_{i,j}^Q \neq \mu_{i,j,k}^{-1}=\bar \mu_{i,j,k}.
$$
This means precisely that $D$ is non-resonant in the classical sense. There is no obstruction to formal linearization.
\end{rems}
\begin{coros}
Let ${\cal I}$ be the ideal generated by the monomials $x^{R_1},\ldots,
x^{R_p}$ generating the ring $\widehat{\cal O}_n^D$ of formal invariants of
$D$. We assume that the non-linear centralizer of $D$ is generated by the same
monomials. If $D$ is diophantine on ${\cal I}$ then, in a good holomorphic coordinate system, we have 
$$
V({\cal I})=\{z\in (\Bbb C^n,0)\;|\; z^{R_1}=\cdots=z^{R_p}=0 \},
$$
and 
$$
\rho_{i|V({\cal I})}(z)= B_i\bar z.
$$
\end{coros}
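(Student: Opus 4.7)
The plan is to combine Corollary~\ref{invar-group} with the mode-by-mode argument used in the proof of Theorem~\ref{theo-main}, specialized to the ideal ${\cal I}$ generated by $x^{R_1},\ldots,x^{R_p}$. First I would apply Corollary~\ref{invar-group} to the group $G$ generated by $F_{i,j}=\rho_i\rho_j$: the commutativity of $M$ gives $G$ abelian, the non-linear centralizer of $D$ is generated by the $x^{R_l}$'s by hypothesis, and $D$ is diophantine on ${\cal I}=\text{ResIdeal}$ by hypothesis. Hence there exists a germ of holomorphic diffeomorphism $\Phi$ of $(\Bbb C^n,0)$, tangent to the identity, such that $\Phi^{-1}F_{i,j}\Phi-D_{i,j}z\in{\cal I}^n$ for all $i,j$. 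Since ${\cal I}$ is monomial, in the new coordinates $V({\cal I})$ is exactly $\{z\in(\Bbb C^n,0):z^{R_1}=\cdots=z^{R_p}=0\}$, a finite intersection of unions of coordinate hyperplanes.

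Next I would re-examine, in these new coordinates, the identity
$$
R_i(\bar z)-D_{i,j}R_i(\bar F_{i,j})=D_{i,j}B_i\bar f_{i,j}(\bar z)+f_{i,j}(\rho_j)
$$
established in the preceding lemma. Since $f_{i,j}$ has its components in ${\cal I}$, the quantity $\bar f_{i,j}$ has components in $\bar{\cal I}$; and since the $\rho_j$-orbit of $V({\cal I})$ stays in $V({\cal I})$ (a consequence of $B_j\bar B_j=Id$ together with the $D$-invariance of ${\cal I}$), the term $f_{i,j}(\rho_j)$ also lies in $\bar{\cal I}$. Writing $\bar F_{i,j}\bar z=\bar D_{i,j}\bar z+\bar f_{i,j}(\bar z)$ and Taylor expanding, $R_i(\bar F_{i,j})\equiv R_i(\bar D_{i,j}\bar z)\pmod{\bar{\cal I}}$. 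The identity thus reduces to $R_i(\bar z)-D_{i,j}R_i(\bar D_{i,j}\bar z)\in(\bar{\cal I})^n$, which mode-by-mode reads, for every $z^Q\notin{\cal I}$, every $(i,k)$ and every $j$,
$$
(1-\mu_{i,j,k}\bar\mu_{i,j}^Q)R_{i,k,Q}=0.
$$

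To conclude $R_{i,k,Q}=0$ I need that some $j$ breaks the resonance $\mu_{i,j,k}\bar\mu_{i,j}^Q=1$. The centralizer hypothesis says: if $\mu_{i,j}^Q=\mu_{i,j,k}$ for all $(i,j)$ then $z^Q\in{\cal I}$. The relations $B_i\bar B_i=Id$ give $D_{j,i}=D_{i,j}^{-1}$ and hence $\mu_{i,j,k}^{-1}=\mu_{j,i,k}$; combined with $G$ abelian (so the collections $\{D_{i,j}\}$ and $\{D_{j,i}\}$ coincide within the commuting family of linear parts), the anti-holomorphic resonance $\bar\mu_{i,j}^Q=\mu_{i,j,k}^{-1}$ for all $j$ translates into the holomorphic resonance $\mu_{i,j}^Q=\mu_{i,j,k}$ for all $j$. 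Hence $z^Q\notin{\cal I}$ forces some $j$ with $\mu_{i,j,k}\bar\mu_{i,j}^Q\neq 1$, so $R_{i,k,Q}=0$ and $R_i\in(\bar{\cal I})^n$. Restricting to $V({\cal I})$ kills every element of $\bar{\cal I}$ and yields $\rho_{i|V({\cal I})}(z)=B_i\bar z$.

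The main technical step is the translation between holomorphic and anti-holomorphic resonance data, which rests on $B_i\bar B_i=Id$ and on the abelianness of $G$; the same identities also supply the $\rho_j$-invariance of ${\cal I}$ needed to place $f_{i,j}(\rho_j)$ in $(\bar{\cal I})^n$ in the second step.
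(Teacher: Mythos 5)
Your overall strategy is the right one: apply Corollary~\ref{invar-group} to the group $G$ to obtain the holomorphic linearization of the $F_{i,j}$'s on ${\cal I}=\text{ResIdeal}$, and then run the mode-by-mode argument from the proof of Theorem~\ref{theo-main} (which is really an induction on $|Q|$, and which also uses that ${\cal I}$ is invariant under each linear map $z\mapsto B_i\bar z$) to show $R_i\in(\bar{\cal I})^n$.

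However, the key ``translation'' step is wrong as stated. You assert that $\bar\mu_{i,j}^Q=\mu_{i,j,k}^{-1}$ for all $j$ translates into $\mu_{i,j}^Q=\mu_{i,j,k}$ for all $j$, apparently by writing $\mu_{i,j,k}^{-1}=\mu_{j,i,k}$ and invoking that the families $\{D_{i,j}\}$ and $\{D_{j,i}\}$ coincide. But conjugating $\bar\mu_{i,j}^Q=\mu_{i,j,k}^{-1}$ gives $\mu_{i,j}^Q=\bar\mu_{i,j,k}^{-1}$, and $\bar\mu_{i,j,k}^{-1}$ is \emph{not} $\mu_{i,j,k}$ unless $|\mu_{i,j,k}|=1$, which is not assumed. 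The reality constraint you actually have is $B_iD_{i,j}^{-1}B_i^{-1}=\bar D_{i,j}$ (a consequence of $B_i\bar B_i=Id$, $D_{i,j}=B_i\bar B_j$, and the definitions), valid simultaneously for every $j$ with $i$ fixed. Since all these matrices are diagonal, $B_i$ must permute the common eigenspaces, so for each $k$ there exists a coordinate index $k'=\pi_i(k)$ with $\bar\mu_{i,j,k'}=\mu_{i,j,k}^{-1}$ for all $j$. Conjugating your hypothesis then gives $\mu_{i,j}^Q=\bar\mu_{i,j,k}^{-1}=\mu_{i,j,k'}$ for all $j$, i.e.\ a holomorphic resonance at the \emph{possibly different} coordinate index $k'$, not at $k$. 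This is enough, because the centralizer hypothesis is quantified over \emph{some} coordinate index: propagating the relation from the subfamily $\{D_{i,j}\}_j$ to all of $\{D_{\alpha,\beta}\}$ (via $D_{\alpha,\beta}=D_{i,\alpha}^{-1}D_{i,\beta}$), one gets $\mu_{\alpha,\beta}^Q=\mu_{\alpha,\beta,k'}$ for all $(\alpha,\beta)$ and hence $z^Q\in{\cal I}$. So the conclusion is salvageable, but the identity you wrote down is false and the argument as given does not establish the non-resonance on ${\cal I}$.

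A second, lesser gap: you justify $f_{i,j}(\rho_j)\in(\bar{\cal I})^n$ by saying the $\rho_j$-orbit of $V({\cal I})$ stays in $V({\cal I})$ ``as a consequence of $B_j\bar B_j=Id$ and the $D$-invariance of ${\cal I}$''. What is actually needed (both for this term and for $f_{i,j}(B_j\bar z)$) is the invariance of the monomial ideal ${\cal I}$ under the linear anti-holomorphic involutions $z\mapsto B_j\bar z$, which Theorem~\ref{theo-main} takes as an explicit hypothesis. For ${\cal I}=\text{ResIdeal}$ this is automatic, but the reason is that $z\mapsto B_j\bar z$ conjugates the linear family to itself — indeed $B_j\bar D_{\alpha,\beta}\bar B_j=D_{i,\alpha}D_{\beta,i}=D_{\beta,\alpha}$ by the group law — and hence preserves the resonance structure; it does not follow merely from $B_j\bar B_j=Id$. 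Finally, the passage from the identity $(\ref{conj})$ to the mode-by-mode equation requires the induction on $|Q|$ that the paper's proof of Theorem~\ref{theo-main} carries out, since $f_{i,j}(\rho_j)-f_{i,j}(B_j\bar z)$ involves $R_j$ itself; your write-up elides this induction.
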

\begin{coros}
Let us consider two totally real $n$-manifolds of $(\Bbb C^n,0)$ not
intersecting transversally at the origin. Assume that the $l$ first
eigenvalues of $DF(0)$ are one. Let $\mu^{R_1}=1,\ldots, \mu^{R_p}=1$ be the
other (i.e. $R_i\in \Bbb N^n$ and $|R_i|>1$) generators of resonant
relations. 
Let 
$$
V({\cal I})=\{z\in (\Bbb C^n,0)\;|\; z_1=\cdots =z_l=z^{R_1}=\cdots=z^{R_p}=0 \}.
$$
If $DF(0)$ is diophantine on $V({\cal I})$, then in good holomorphic
coordinate system,
$$
M_i\cap V({\cal I}) = \left \{z\in V({\cal I})\,|\; B_i\bar z = z\right\},\quad i=1,2.
$$
\end{coros}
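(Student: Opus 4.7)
The strategy is to apply the preceding corollary to the cyclic group $G = \langle F \rangle$ generated by $F := F_{1,2} = \rho_1 \circ \rho_2$, whose linear part at the origin is $D := DF(0)$, taking as monomial ideal
$$
{\cal I} := (z_1, \ldots, z_l, z^{R_1}, \ldots, z^{R_p}).
$$

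First I would identify ${\cal I}$ with the ideal generated by the monomial generators of the ring of formal invariants $\widehat{\cal O}_n^D$. Since $\mu_1 = \cdots = \mu_l = 1$, the linear monomials $z_1, \ldots, z_l$ are themselves $D$-invariant of degree one; together with the higher-degree generators $z^{R_1}, \ldots, z^{R_p}$ they provide a minimal monomial generating set of $\widehat{\cal O}_n^D$. The assumption that $DF(0)$ is diophantine on $V({\cal I})$ is then the diophantine condition $(\omega(D,{\cal I}))$ from Definition 2.1.

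Second, I would verify that the non-linear centralizer ${\cal C}_D$ is generated over $\widehat{\cal O}_n^D$ by the same monomials. Any resonant vector monomial $z^Q \partial_k \in {\cal C}_D$ satisfies $\mu^Q = \mu_k$. If $k \leq l$ then $\mu^Q = 1$, so $z^Q$ is itself invariant and lies in ${\cal I}$. If $k > l$, then $q_k \geq 1$ (otherwise the relation $\mu^{Q-e_k}\mu_k = 1$ together with $\mu_k \neq 1$ forces $Q$ to move within an invariant lattice meeting the coordinate axis of index $k$), so $z^Q = z_k \cdot z^{Q-e_k}$ with $z^{Q-e_k}$ invariant, again in ${\cal I}$. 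In particular $F$ can be brought into Poincaré-Dulac-Chaperon formal normal form, whose non-linear part sits in ${\cal C}_D \subset ({\cal I})^n$, so $G$ is formally linearizable on $\hat{\cal I}$.

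Third, I would check compatibility of ${\cal I}$ with each antiholomorphic involution $z \mapsto B_i \bar z$ (which is automatic in the commutative setting because these involutions conjugate $D$ into itself, hence preserve the resonance lattice), and then invoke the preceding corollary (or directly Theorem \ref{theo-main}). This produces holomorphic coordinates in which $V({\cal I}) = \{z_1 = \cdots = z_l = z^{R_1} = \cdots = z^{R_p} = 0\}$ and $\rho_{i|V({\cal I})}(z) = B_i \bar z$ for $i = 1, 2$. The conclusion follows immediately:
$$
M_i \cap V({\cal I}) = FP(\rho_i) \cap V({\cal I}) = \{z \in V({\cal I}) : B_i \bar z = z\}, \quad i = 1, 2.
$$
The main obstacle I anticipate is the centralizer bookkeeping in the second step: one must ensure that the phrase ``other generators of resonant relations'' really captures every vector monomial appearing in ${\cal C}_D$, which relies on a lattice argument relating exponent vectors $Q$ with $\mu^Q = \mu_k$ to the stated generators of the invariant lattice.
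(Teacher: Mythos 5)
Your overall strategy is the right one: take $G=\langle F\rangle$ with $F=\rho_1\circ\rho_2$, set ${\cal I}=(z_1,\ldots,z_l,z^{R_1},\ldots,z^{R_p})$, and invoke the preceding corollary. You are also correct to flag the centralizer bookkeeping as the delicate point. But your step 2 as written is wrong: it is \emph{not} true in general that ${\cal C}_D\subset({\cal I})^n$. Your argument for $k>l$ rests on the claim that $q_k\geq 1$, and when $q_k=0$ the ``relation $\mu^{Q-e_k}\mu_k=1$'' is undefined and the parenthetical lattice heuristic does not produce a proof. Concretely, take $n=5$ with $\mu_0=1$ (so $l=1$), $\mu_1=2$, $\mu_2=1/2$, $\mu_3=4$, $\mu_4=1/4$; this spectrum is compatible with the pair structure $\bar\mu_i=\mu_{\sigma(i)}^{-1}$, $\sigma=(12)(34)$, and is realizable by block-reflection matrices $B_1,B_2$. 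Then the invariant ideal is ${\cal I}=(z_0,\,z_1z_2,\,z_3z_4,\,z_1^2z_4,\,z_2^2z_3)$, yet $\mu_2\mu_3=2=\mu_1$ gives the resonant vector monomial $z_2z_3\,\partial_1\in{\cal C}_D$ with $z_2z_3\notin{\cal I}$. So ${\cal C}_D\subset({\cal I})^n$ is a genuine additional hypothesis. Indeed the preceding corollary states it explicitly (``we assume that the non-linear centralizer of $D$ is generated by the same monomials''), and that assumption—equivalently, non-resonance of $M$ on ${\cal I}$—must be understood as carried over into the present corollary rather than derived. Without it, $F$ need not be formally linearizable on $\hat{\cal I}$ and Theorem \ref{theo-main} does not apply. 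Steps 1 and 3 (identifying $G$, compatibility of ${\cal I}$ with the involutions via $\rho_i D\rho_i=D^{-1}$, and reading off the conclusion from $\rho_{i|V({\cal I})}=B_i\bar z$) are fine.
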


\bibliographystyle{alpha}
\def\cprime{$'$}

\end{document}